\newtheorem{theorem}{Theorem}[section]
\newtheorem{claim}[theorem]{Claim}
\newtheorem{corollary}[theorem]{Corollary}
\newtheorem{lemma}[theorem]{Lemma}
\newtheorem{question}[theorem]{Question}
\newtheorem*{claim*}{Claim}
\theoremstyle{definition}
\newtheorem{remark}[theorem]{Remark}
\newtheorem{definition}[theorem]{Definition}
\newtheorem*{definition*}{Definition}
\newcommand{\N}{\mathbb N}
\newcommand{\Q}{\mathbb Q}
\newcommand{\im}{\operatorname{im}}
\newcommand{\dom}{\operatorname{dom}}
\newcommand{\Sym}{\operatorname{Sym}}
\def\id{\operatorname{id}}
\newcommand{\set}[2]{\{#1:#2\}}
\newcommand{\makeset}[2]{\left\lbrace #1 \;\middle|\;
 \begin{tabular}{@{}l@{}}
   #2
  \end{tabular}
  \right\rbrace}
\author{S. Bardyla, L. Elliott, J. D. Mitchell, and Y. P\'eresse}
\address{S.~Bardyla: University of Vienna, Institute of Mathematics, Austria}
\thanks{The research of the first named author was funded in whole by the Austrian Science Fund FWF [10.55776/ESP399].}
\email{sbardyla@gmail.com}
\address{L. Elliott: University of Manchester, Department of Mathematics, UK}
\email{luna.elliott142857@gmail.com,luna.elliott@manchester.ac.uk}
\address{J. D. Mitchell: University of St Andrews, School of Mathematics and Statistics, Scotland, UK}
\email{jdm3@st-andrews.ac.uk}
\address{Y. P\'eresse: University of Hertfordshire, School of Physics, Engineering and Computer Science, UK}
\email{y.peresse@herts.ac.uk}
\subjclass[2020]{20B30, 20B35, 20K45, 22A05, 54H15}
\keywords{group Zariski topology, semigroup Zariski topology, symmetric group, Hausdorff-Markov topology, Frech\'et-Markov topology}
\title{A note on intrinsic topologies of groups}
\begin{document}

\begin{abstract}
We investigate topologies on groups which arise naturally from their algebraic structure, including the Frech\'et-Markov, Hausdorff-Markov, and various kinds of Zariski topologies. 
Answering a question by Dikranjan and Toller, we show that there exists a countable abelian group in which no bounded version of the Zariski topology coincides with the full Zariski topology. Complementing a recent result by Goffer and Greenfeld, we show that on any group with no algebraicity the semigroup Zariski topology is hyperconnected and hence, in many cases, is distinct from the group Zariski topology. 
Finally, we show that on the symmetric groups, the semigroup Hausdorff-Markov topology coincides with the topology of pointwise convergence.  
    \end{abstract}

    \maketitle


\section{Introduction}
Recall that {\em a group topology} for a group $G$ is any topology $\tau$ on $G$ under which multiplication and inversion are continuous in $(G,\tau)$. The symmetric group $\Sym(X)$ on a set $X$ has a natural group topology, called the {\em topology of pointwise convergence}, which it inherits as a subspace of the Cartesian product $X^X$ under the usual Tychonoff product topology, where every copy of $X$ is given the discrete topology. The topology of pointwise convergence is defined in terms of the underlying set $X$, but it coincides with the group Zariski topology for $\Sym(X)$ which is defined below purely in terms of abstract algebraic properties. In this sense, the topology of pointwise convergence is `intrinsic' to $\Sym(X)$ as an abstract group. 
In this paper, we consider topologies, such as Zariski topologies, which arise from the algebraic properties of any abstract group. In the rest of this section, we state our main results including the necessary definitions and relevant context. Sections $2,3$ and $4$ contain the proofs of our results.

We compose function from left to right and adopt the convention that \(\N=\{0, 1, 2, \ldots\}\).
Let $G$ be a group. A function $f: G\rightarrow G$ is called a {\em group polynomial of degree} $n$ if $(x)f=a_0x^{\varepsilon_1}a_1\dots x^{\varepsilon_n}a_n$, where $a_0,\ldots,a_n\in G$ and $\varepsilon_i\in\{-1,1\}$ for all $i\leq n$. We consider constant functions to be group polynomials of degree $0$. 

\begin{definition}
 For any group $G$ the group Zariski topology $\mathfrak{Z}^{\pm}(G)$ on $G$ is generated by the sets $\{x\in G: (x)f\neq 1_G\}$, where $f$ is a group polynomial.   
\end{definition}

The group Zariski topology was introduced by Markov~\cite{Markov} in order to describe countable nontopologizable groups. Recall that a group $G$ is called {\em nontopologizable} if the only Hausdorff topology on $G$ which makes the group operations continuous is discrete. Markov showed that a countable group $G$ is nontopologizable if and only if the group Zariski topology $\mathfrak{Z}^{\pm}(G)$ is discrete. A topology $\tau$ on a semigroup $S$ is called {\em shift-continuous}, if for every $a\in S$ the shifts $l_a: S\rightarrow S$, $s\mapsto as$ and $r_a: S\rightarrow S$, $s\mapsto sa$ are continuous.
Since for every group $G$ the topology $\mathfrak{Z}^{\pm}(G)$ is shift-continuous (see e.g.~\cite{main}), we have that $\mathfrak{Z}^{\pm}(G)$ is discrete if and only if there exist group polynomials $f_1,\ldots,f_n$ such that $\{1_G\}=\bigcap_{i\leq n}\{x\in G: (x)f_i\neq 1_G\}$. It follows that the discreteness of the group Zariski topology on $G$ is witnessed by the group polynomials of some bounded degree. This perhaps inspired Banakh and Protasov~\cite{BT} to introduce the following weaker versions of the group Zariski topology.
 
\begin{definition}
Let $G$ be a group. For each $n\in\N$ let $\mathfrak{Z}_n^{\pm}(G)$ be the topology on $G$ 
 generated by the sets $\{x\in G: (x)f\neq 1_G\}$, where $f$ is a group polynomial of degree at most $n$.     
\end{definition}

It is clear that for each group $G$, $\mathfrak{Z}_0^{\pm}(G)$ is the anti-discrete topology, $\mathfrak{Z}_n^{\pm}(G)\subseteq \mathfrak{Z}_m^{\pm}(G)$ whenever $n\leq m$, and $\bigcup_{n\in\N}\mathfrak{Z}_n^{\pm}(G)$ forms a basis for $\mathfrak{Z}^{\pm}(G)$.
The arguments above allow us to reformulate Markov's theorem as follows: a countable group $G$ is nontopologizable if and only if $\mathfrak{Z}_n^{\pm}(G)$ is discrete for some $n\in\N$. In particular, for a nontopologizable group $G$ there exists $n\in\N$ such that $\mathfrak{Z}_n^{\pm}(G)=\mathfrak{Z}^{\pm}(G)$. This observation serves as a motivation for the following question posed in~\cite[Question 2.4]{DikranjanToller}. 

\begin{question}[Dikranjan, Toller]\label{Dikr}
    Is it true that for every group \(G\) there exists $n\in\N$ such that $\mathfrak{Z}^{\pm}(G)=\mathfrak{Z}^{\pm}_{n}(G)$? 
\end{question}


The group Zariski topology has its natural counterpart for semigroups. 
Let $S$ be a semigroup. A function $f: S\rightarrow S$ is called a {\em semigroup polynomial of degree} $n$ if $(x)f=a_0xa_1\dots xa_n$ for some $a_0,\ldots,a_n\in S^1$, where $S^1=S$ if $S$ contains a unit or $S^{1}=S\cup\{1\}$ otherwise. 
Just as in groups, constant functions are semigroup polynomials of degree $0$. 

\begin{definition}
 For any semigroup $S$ the semigroup Zariski topology $\mathfrak{Z}(S)$ on $S$ is generated by the sets $\{x\in S: (x)f\neq (x)g\}$, where $f$ and $g$ are semigroup polynomials.   
\end{definition}


The Zariski topologies on groups and semigroups have been investigated in~\cite{Banakh, BT, BMR,Bogo,B,DikranjanToller, DT1, DS1, DS2, DS3, DS4, Jacobs, PS, PS2}. In particular, Zariski topologies play a crucial role in the existence of unique compatible Polish topologies on (semi)groups~\cite{main,main1,Gaughan,Gh,K2,KM,KR,R,Sa}.

The semigroup Zariski topology admits a similar stratification as the group Zariski topology. 

\begin{definition}
Let $S$ be a semigroup. For each $n\in\N$ let $\mathfrak{Z}_n(S)$ be the topology on $S$ 
 generated by the sets $\{x\in S: (x)f\neq (x)g\}$, where $f$ and $g$ are semigroup polynomials of degree at most $n$.     
\end{definition}

It is easy to check that for each group $G$ we have $\mathfrak{Z}(G)\subseteq \mathfrak{Z}^{\pm}(G)$. Let $G$ be a commutative group. Observe that group polynomials on $G$ have the form $(x)f=ax^n$, where $a\in G$ and $n\in \mathbb Z$. Since for each negative integer $n$, $\{x\in G: ax^n\neq 1_G\}=\{x\in G: a^{-1}x^{|n|}\neq 1_G\}$, it follows that $\mathfrak{Z}(G)=\mathfrak{Z}^{\pm}(G)$. Fix any semigroup polynomials $(x)f=ax^n$ and $(x)g=bx^m$ on $G$, where $a,b\in G$ and $n,m\leq k$. Put $(x)h=ab^{-1}x^{n-m}$ and observe that $h$ is a group polynomial of degree $\leq k$. Note that 
$\{x\in G: (x)f\neq (x)g\}=\{x\in G: (x)h\neq 1_G\}$. 
At this point it is easy to see that $\mathfrak{Z}^{\pm}_n(G)=\mathfrak{Z}_n(G)$ for each $n\in\N$.

In this paper we obtain the following answer to Question~\ref{Dikr}. 

\begin{theorem}\label{thcom}
There exists a countable commutative group $G$ such that $\mathfrak{Z}_{n}(G)$ is properly contained in $\mathfrak{Z}_{m}(G)$ for every non-negative integers $n<m$. In particular,
 for all \(n\in \N\) we have 
\[\mathfrak{Z}(G)=\mathfrak{Z}^{\pm}(G)\neq \mathfrak{Z}^{\pm}_{n}(G)=\mathfrak{Z}_{n}(G).\]   
\end{theorem}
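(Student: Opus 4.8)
The plan is to realise the entire chain inside a single countable abelian group chosen to have very homogeneous torsion. Throughout I would work multiplicatively and, for $k\in\N$, write $G[k]=\{x\in G:x^{k}=1_G\}$ for the subgroup of elements whose order divides $k$. As recorded in the excerpt, on a commutative group every group polynomial has the form $x\mapsto ax^{k}$, and $\mathfrak{Z}^{\pm}_n(G)=\mathfrak{Z}_n(G)$; moreover the sets $\{x:x^{k}=a^{-1}\}$, i.e.\ the cosets of the subgroups $G[k]$ with $1\le k\le n$, form a subbasis for the closed sets of $\mathfrak{Z}_n(G)$. The tool I would use is the resulting description of closure: a point $y$ fails to lie in the $\mathfrak{Z}_n$-closure of a set $A$ precisely when finitely many cosets of subgroups $G[s]$ with $s\le n$ cover $A$ while none of them contains $y$. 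In particular, $A$ is $\mathfrak{Z}_n$-dense as soon as $A$ admits \emph{no} finite cover by cosets of the $G[s]$, $s\le n$, since then there is a fortiori no such cover omitting a given point.

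For the group I would take $G=\bigoplus_{i\in\N}\mathbb{Q}/\mathbb{Z}$, which is countable. Its defining feature is that for every prime $p$ and every $a\ge 1$ the layer $G[p^{a}]/G[p^{a-1}]$ is infinite; consequently $[G[m]:G[d]]=\infty$ for every $m\ge 2$ and every proper divisor $d\mid m$. This last fact reduces, via the primary decomposition $G[m]=\bigoplus_{p}G[p^{v_p(m)}]$, to the infiniteness of a single $p$-layer, which is immediate for a countable direct sum of Pr\"ufer groups.

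The witnesses will be the subgroups $G[n+1]$. For each $n\ge 1$, the set $G[n+1]$ is the zero set of the degree-$(n+1)$ polynomial $x\mapsto x^{n+1}$, hence $\mathfrak{Z}_{n+1}$-closed, and it is proper because $G$ contains elements of order exceeding $n+1$; so it remains only to show that it is $\mathfrak{Z}_n$-dense. I would argue by contradiction: if finitely many cosets $b_iG[s_i]$ with $s_i\le n$ covered $G[n+1]$, then intersecting with $G[n+1]$ would exhibit $G[n+1]$ as a finite union of cosets of the subgroups $G[\gcd(s_i,n+1)]$, each $\gcd(s_i,n+1)$ being a proper divisor of $n+1$. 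By B.~H.~Neumann's lemma on coverings of a group by finitely many cosets, one of these subgroups would then have finite index in $G[n+1]$, contradicting the index computation above. Hence no finite cover exists, $G[n+1]$ is $\mathfrak{Z}_n$-dense, and being proper and $\mathfrak{Z}_{n+1}$-closed it is not $\mathfrak{Z}_n$-closed; therefore $\mathfrak{Z}_n(G)\subsetneq\mathfrak{Z}_{n+1}(G)$.

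Finally I would record the base step $\mathfrak{Z}_0\subsetneq\mathfrak{Z}_1$ (the anti-discrete topology is strictly coarser than the $T_1$ topology $\mathfrak{Z}_1$ on the infinite group $G$), conclude $\mathfrak{Z}_n(G)\subsetneq\mathfrak{Z}_m(G)$ for all $n<m$ by transitivity, and derive the displayed identities from $\mathfrak{Z}(G)=\mathfrak{Z}^{\pm}(G)$ and $\mathfrak{Z}_n(G)=\mathfrak{Z}_n^{\pm}(G)$ (both already noted for commutative $G$) together with $\mathfrak{Z}^{\pm}(G)=\bigcup_n\mathfrak{Z}_n(G)\neq\mathfrak{Z}_n(G)$. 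The one genuine obstacle is forcing strictness at \emph{every} step rather than only at prime powers: for composite $m$ a coset of $G[m]$ is typically a finite union of cosets of smaller subgroups $G[d]$ and hence already closed at lower levels, so naive torsion groups collapse intermediate steps. Building $G$ with infinite socle layers is precisely what makes Neumann's lemma forbid every such finite cover, and this is the crux that yields a strict refinement at each $n$.
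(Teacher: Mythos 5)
Your proposal is correct, but it reaches the theorem by a genuinely different route than the paper. You take the standard divisible group $G=\bigoplus_{i\in\N}\mathbb{Q}/\mathbb{Z}$ and, writing $G[k]=\{x\in G: x^k=1_G\}$, you separate $\mathfrak{Z}_n(G)$ from $\mathfrak{Z}_{n+1}(G)$ by showing that the subgroup $G[n+1]$ is $\mathfrak{Z}_{n+1}$-closed, proper, and $\mathfrak{Z}_n$-dense; density is obtained from B.~H.~Neumann's covering lemma, since any finite cover of $G[n+1]$ by cosets of the subbasic subgroups $G[s]$, $1\le s\le n$, would intersect down to a finite cover of $G[n+1]$ by cosets of the subgroups $G[\gcd(s,n+1)]$, each of infinite index in $G[n+1]$ because every layer $G[p^a]/G[p^{a-1}]$ is infinite. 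The paper instead builds a bespoke group $\bigoplus_{k\in\N}F/N_k$ (with $F$ free abelian and $N_k$ generated by $k$-th powers of the even-indexed generators) and argues self-containedly: on a test set $T_m$ every equation $ax^p=1$ with $p\le n<m$ has only finitely many solutions, by counting occurrences of letters in words of $F$, so the $\mathfrak{Z}_n$-subspace topology on $T_m$ is cofinite, while the degree-$m$ equation $x^m=1$ cuts out an infinite coinfinite subset of $T_m$, so the $\mathfrak{Z}_m$-subspace topology is not. The trade-offs are clear: your argument is shorter and more conceptual, with strictness at every level following uniformly from one classical lemma, and you correctly isolate the crux --- the layers $G[p^a]/G[p^{a-1}]$ must be infinite so that for composite $m$ the set $G[m]$ cannot split into finitely many cosets of smaller $G[d]$'s (as happens, say, in $\mathbb{Z}/6\mathbb{Z}$), which is exactly what Neumann's lemma excludes; the paper's proof needs no external covering theorem and its letter-counting analysis makes explicit which equations have finite solution sets. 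Two cosmetic points: your closure criterion should be phrased so that the degenerate degree-zero subbasic closed sets ($\varnothing$ and $G$) are discarded, as you implicitly do; and $\mathfrak{Z}^{\pm}(G)$ is not literally the union $\bigcup_{n\in\N}\mathfrak{Z}_n(G)$ (that union is only a basis), though your intended conclusion $\mathfrak{Z}_n(G)\subsetneq\mathfrak{Z}_{n+1}(G)\subseteq\mathfrak{Z}^{\pm}(G)$ is unaffected.
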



The following question appeared in~\cite[Question 2.8]{main}.

\begin{question}[Elliott, Jonu\v{s}as, Mesyan, Mitchell, Morayne, P\'{e}resse]\label{q3}
Does for every group $G$ the semigroup Zariski topology $\mathfrak{Z}(G)$ coincide with the group Zariski topology $\mathfrak{Z}^{\pm}(G)$?
\end{question}

Recently, Goffer and Greenfeld~\cite{GG} answered Question~\ref{q3} in the negative by constructing a countable group $G$ such that $\mathfrak{Z}(G)\neq \mathfrak{Z}^{\pm}(G)$. Although their example is quite technical, as its construction required small cancellation theory.



In this paper, we unveil a whole class of examples of groups with distinct group Zariski and semigroup Zariski topologies. By $\Sym_\omega(X)$ we denote the subgroup of the symmetric group $\Sym(X)$ which consists of elements with finite support.

\begin{theorem}\label{thmain1}
Let $X$ be an infinite set and $G$ be a group such that $\Sym_\omega(X)\subseteq G\subseteq \Sym(X)$. Then $\mathfrak{Z}^{\pm}_{4}(G)=\mathfrak{Z}^{\pm}(G)\neq \mathfrak{Z}(G)$. 
\end{theorem}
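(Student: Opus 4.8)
The plan is to establish the two assertions separately: the equality $\mathfrak{Z}^{\pm}_{4}(G)=\mathfrak{Z}^{\pm}(G)$, and the inequality $\mathfrak{Z}^{\pm}(G)\neq\mathfrak{Z}(G)$. Let $\mathcal{T}$ denote the topology of pointwise convergence on $G$ (inherited from $X^{X}$ with $X$ discrete), under which $G$ is a Hausdorff topological group. Since every group polynomial is $\mathcal{T}$-continuous and $\{1_G\}$ is $\mathcal{T}$-closed, each subbasic set $\{x:(x)f\neq1_G\}$ is $\mathcal{T}$-open; hence $\mathfrak{Z}^{\pm}_{4}(G)\subseteq\mathfrak{Z}^{\pm}(G)\subseteq\mathcal{T}$. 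The first equality will follow once I prove the reverse inclusion $\mathcal{T}\subseteq\mathfrak{Z}^{\pm}_{4}(G)$, which forces the chain to collapse.

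\textbf{The equality.} A subbasis for $\mathcal{T}$ consists of the sets $\{g:(a)g=b\}$. Writing $F_a:=\{g\in G:(a)g=a\}$ and using that right shift by $(a\ b)$ is a $\mathfrak{Z}^{\pm}_{4}(G)$-homeomorphism (shift-continuity), each such set is a translate of $F_a$, so it suffices to show $F_a$ is $\mathfrak{Z}^{\pm}_{4}(G)$-open, i.e. that $M_a:=\{g:(a)g\neq a\}$ is $\mathfrak{Z}^{\pm}_{4}(G)$-closed. The key gadget: for $p,q\in X$ with $p\neq a$ put
\[
(x)f_{p,q}=\bigl[\,x^{-1}(a\ p)x,\ (a\ q)\,\bigr],
\]
a group polynomial of degree $4$ (all transpositions lie in $\Sym_\omega(X)\subseteq G$). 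As $x^{-1}(a\ p)x=((a)x\ (p)x)$ and two transpositions commute exactly when their supports are equal or disjoint,
\[
V(f_{p,q})=\bigl\{g:\{(a)g,(p)g\}=\{a,q\}\ \text{ or }\ \{(a)g,(p)g\}\cap\{a,q\}=\varnothing\bigr\}.
\]
This gadget does two things. It separates a prescribed $g_0\in F_a$: since $(a)g_0=a\in\{a,q\}$ the disjoint case is excluded, so $g_0\in V(f_{p,q})$ iff $(p)g_0=q$, and I choose $q\notin\{a,(p)g_0\}$. And it covers most of $M_a$: if $g\in M_a$ has $(p)g\neq a$, then for any $q\notin\{a,(a)g,(p)g\}$ the supports are disjoint, so $g\in V(f_{p,q})$. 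Fixing $p$ and a three-element set $Q\subseteq X\setminus\{a,(p)g_0\}$, every $g\in M_a$ with $(p)g\neq a$ lies in $V(f_{p,q})$ for some $q\in Q$ (as $|Q|=3$ avoids the at most two excluded values); the remaining $g\in M_a$ have $(p)g=a$, hence $(p')g\neq a$ for a second point $p'\neq p$ by injectivity, and are caught by an analogous family $\{f_{p',q'}:q'\in Q'\}$ with $Q'\subseteq X\setminus\{a,(p')g_0\}$, $|Q'|=3$. Thus $M_a$ is contained in the union of these finitely many degree-$4$ zero sets, none containing $g_0$; intersecting over all $g_0\in F_a$ exhibits $M_a$ as an intersection of basic $\mathfrak{Z}^{\pm}_{4}(G)$-closed sets. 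Hence $F_a$ is open, $\mathcal{T}\subseteq\mathfrak{Z}^{\pm}_{4}(G)$, and $\mathfrak{Z}^{\pm}_{4}(G)=\mathfrak{Z}^{\pm}(G)=\mathcal{T}$.

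\textbf{The inequality.} As $\mathfrak{Z}(G)\subseteq\mathfrak{Z}^{\pm}(G)$, it suffices to exhibit a $\mathfrak{Z}^{\pm}(G)$-open set that is not $\mathfrak{Z}(G)$-open; I will show the stabiliser $F_a$ (now known to be $\mathfrak{Z}^{\pm}(G)$-open) is not $\mathfrak{Z}(G)$-open. If it were, then since $1_G\in F_a$ there would be semigroup polynomials $f_1,h_1,\dots,f_r,h_r$ with $(1_G)f_i\neq(1_G)h_i$ and $\bigcap_{i}\{x:(x)f_i\neq(x)h_i\}\subseteq F_a$. I claim this contradicts the following lemma: \emph{if $f,h$ are semigroup polynomials with $(1_G)f\neq(1_G)h$, then $((a\ b))f\neq((a\ b))h$ for all but finitely many $b\in X$.} Granting it, pick $b\neq a$ avoiding the finitely many exceptional values for every $i$; then $(a\ b)$ lies in the intersection, so $(a\ b)\in F_a$, contradicting $(a)(a\ b)=b\neq a$.

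Proving the lemma is the main obstacle of this half. The clean structural input is that for two points $b,b'$ outside the (finite) union of the supports of the constants of $f$ and $h$, conjugation by $(b\ b')$ commutes past every constant, giving $((a\ b'))f=(b\ b')\bigl(((a\ b))f\bigr)(b\ b')$ and likewise for $h$; thus the truth of $((a\ b))f=((a\ b))h$ is the same for all such ``fresh'' $b$, an all-or-nothing dichotomy. To rule out the all-equal case I would trace, for fresh $b$, the image of a point $z$ through the bounded-length words: whenever the $(1_G)f$- and $(1_G)h$-trajectories of $z$ avoid $a$, the substitution $(a\ b)$ acts trivially along the path, so $(z)((a\ b))f=(z)(1_G)f$ and $(z)((a\ b))h=(z)(1_G)h$; as only finitely many $z$ have a trajectory meeting $a$, equality for all fresh $b$ would force $(1_G)f$ and $(1_G)h$ to agree off a finite set. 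The genuinely fiddly remaining step, which I expect to be the crux, is to upgrade this to $(1_G)f=(1_G)h$ by analysing the bounded, $b$-bouncing computation on that finite exceptional set; once done, the dichotomy yields $((a\ b))f\neq((a\ b))h$ for every fresh $b$, completing the lemma and the theorem.
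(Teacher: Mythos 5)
Your first half is fine: the degree-$4$ commutator gadget $(x)f_{p,q}=[x^{-1}(a\ p)x,(a\ q)]$, the three-element sets $Q,Q'$, and shift-continuity do show that each stabiliser $U_{a,a}=\{g\in G:(a)g=a\}$ is $\mathfrak{Z}^{\pm}_{4}(G)$-open, whence $\mathfrak{Z}^{\pm}_{4}(G)=\mathfrak{Z}^{\pm}(G)$ is the topology of pointwise convergence. Note, though, that this is a self-contained reproof of Theorem~\ref{citeB}, which the paper simply cites from Banakh--Guran--Protasov; the actual content of Theorem~\ref{thmain1} in the paper is the inequality.

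The second half has a fatal gap, located exactly at the step you flagged as ``the crux'': your key lemma is false. Take any $c\in X\setminus\{a\}$ and the semigroup polynomials
\[
(x)f=x(a\ c)x,\qquad (x)h=(a\ c)x(a\ c).
\]
Then $(1_G)f=(a\ c)\neq 1_G=(1_G)h$, so $W=\{x\in G:(x)f\neq (x)h\}$ is a subbasic $\mathfrak{Z}(G)$-neighbourhood of $1_G$. However, for every $b\in X\setminus\{a,c\}$ both sides of
\[
((a\ b))f=(a\ b)(a\ c)(a\ b)=(b\ c)=(a\ c)(a\ b)(a\ c)=((a\ b))h
\]
agree, and also $((a\ c))f=(a\ c)=((a\ c))h$. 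So the set of ``exceptional'' $b$ is not finite: it is all of $X$, and $W$ contains no transposition through $a$ at all. This also shows your proposed repair cannot succeed: in this example $(1_G)f$ and $(1_G)h$ agree off the finite set $\{a,c\}$ — precisely the situation your ``upgrade'' step would need to rule out — yet they are distinct. Hence the strategy of locating a transposition $(a\ b)$ inside an arbitrary basic $\mathfrak{Z}(G)$-neighbourhood of $1_G$ is unworkable in principle, not merely unfinished.

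This is why the paper argues differently. It proves (Theorem~\ref{hype}) that $\mathfrak{Z}(G)$ is \emph{hyperconnected} for any subgroup of $\Sym(X)$ with no algebraicity: given two nonempty basic open sets, it builds an element lying in both by a finite induction on partial bijections, at each step choosing the new image outside a finite forbidden set $(T)P$ and invoking no algebraicity to keep the partial map extendable to an element of $G$. The witness produced there is a ``generic'' permutation, not a transposition — exactly the freedom your approach lacks, and exactly what my counterexample exploits. Since an infinite hyperconnected space is never Hausdorff while $\mathfrak{Z}^{\pm}(G)$ is the (Hausdorff) pointwise topology, the inequality $\mathfrak{Z}(G)\neq\mathfrak{Z}^{\pm}(G)$ follows. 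To complete your proof you would need an argument of that strength in place of the transposition lemma.
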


Theorem~\ref{thmain1} is based on the following two key theorems. The first one appeared in~\cite{Banakh}.

\begin{theorem}[Banakh, Guran, Protasov]\label{citeB}
Let $X$ be an infinite set and $G$ be a group such that $\Sym_\omega(X)\subseteq G\subseteq \Sym(X)$. Then $\mathfrak{Z}^{\pm}_{4}(G)=\mathfrak{Z}^{\pm}(G)$ is the topology of pointwise convergence. In particular, $\mathfrak{Z}^{\pm}(G)$ is Tychonoff and totally disconnected.
\end{theorem}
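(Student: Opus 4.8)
The plan is to prove the chain of inclusions $\tau_p \subseteq \mathfrak{Z}^{\pm}_4(G)\subseteq \mathfrak{Z}^{\pm}(G)\subseteq \tau_p$, where $\tau_p$ denotes the topology of pointwise convergence; since the middle inclusion is immediate (polynomials of degree at most $4$ are polynomials), equality of all three follows. The inclusion $\mathfrak{Z}^{\pm}(G)\subseteq\tau_p$ is the soft direction: the subgroup $G\leq\Sym(X)$ inherits from $X^X$ a Hausdorff group topology $\tau_p$, every group polynomial $f$ is $\tau_p$-continuous (being built from multiplication and inversion), and $\{1_G\}$ is $\tau_p$-closed, so each subbasic Zariski set $\{g:(g)f\neq 1_G\}=f^{-1}(G\setminus\{1_G\})$ is $\tau_p$-open. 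The bulk of the work, and the main obstacle, is the reverse inclusion $\tau_p\subseteq\mathfrak{Z}^{\pm}_4(G)$: one must produce, out of polynomials of degree at most $4$, enough open sets to recover pointwise convergence.

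For the hard direction I would show that each subbasic set $U_{x,y}=\{g\in G:(x)g=y\}$ with $x,y\in X$ lies in $\mathfrak{Z}^{\pm}_4(G)$; as $\tau_p$ is generated by these, this suffices. The first ingredient is a conjugation identity: for a transposition $t_{x,p}\in\Sym_\omega(X)\subseteq G$ one has $g^{-1}t_{x,p}g=t_{(x)g,(p)g}$, so the support of this transposition is exactly $\{(x)g,(p)g\}$. Consequently $g^{-1}t_{x,p}g$ moves $y$ if and only if $(x)g=y$ or $(p)g=y$. Choosing two distinct auxiliary points $p_1,p_2\in X\setminus\{x,y\}$ (possible since $X$ is infinite) and intersecting, the alternative $(p_1)g=y=(p_2)g$ becomes impossible by injectivity, whence

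\[U_{x,y}=\{g:g^{-1}t_{x,p_1}g\text{ moves }y\}\cap\{g:g^{-1}t_{x,p_2}g\text{ moves }y\}.\]

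It therefore remains to realise each condition ``$g^{-1}t_{x,p}g$ moves $y$'', i.e. ``$y\in\operatorname{supp}(g^{-1}t_{x,p}g)$'', as a $\mathfrak{Z}^{\pm}_4(G)$-open set.

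To detect support membership at degree $4$, I would test the transposition $s=g^{-1}t_{x,p}g$ against transpositions $t_{y,r}$ through the commutator $f_r(g)=[\,g^{-1}t_{x,p}g,\ t_{y,r}\,]$, which is a group polynomial of degree $4$, so each $C_r=\{g:(g)f_r\neq 1_G\}$ is a subbasic open set of $\mathfrak{Z}^{\pm}_4(G)$. Since $[t_{a,b},t_{y,r}]\neq 1_G$ exactly when the two transpositions share precisely one point of their supports, a short case analysis gives: if $y\in\operatorname{supp}(s)$ then $g\in C_r$ for every $r\neq y$ except possibly the single other point of $\operatorname{supp}(s)$, whereas if $y\notin\operatorname{supp}(s)$ then $g\in C_r$ only for the at most two values $r$ lying in $\operatorname{supp}(s)$. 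Hence, fixing four distinct auxiliary points $r_1,\dots,r_4\in X\setminus\{y\}$, the condition ``$y\in\operatorname{supp}(s)$'' is equivalent to ``$g\in C_{r_i}$ for at least three indices $i$'', i.e. to membership in $\bigcup_{|I|=3}\bigcap_{i\in I}C_{r_i}$, a finite union of finite intersections of subbasic degree-$4$ opens, and so a $\mathfrak{Z}^{\pm}_4(G)$-open set. Combining this with the displayed identity exhibits $U_{x,y}$ as a finite Boolean combination of degree-$4$ sets, completing $\tau_p\subseteq\mathfrak{Z}^{\pm}_4(G)$.

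Finally, the ``in particular'' clause is immediate once the topologies are identified: $G\subseteq X^X$ carries the subspace topology of a product of discrete spaces, which is zero-dimensional and Hausdorff, hence Tychonoff and totally disconnected, and both properties pass to subspaces. The crux of the whole argument, and where the degree $4$ enters, is the support-detection step: a single commutator conflates the motion of $x$ with that of the auxiliary point $p$, and of $y$ with that of $r$, so the real content is the double use of auxiliary points (the pair $p_1,p_2$ to pin down the image of $x$, and the quadruple $r_1,\dots,r_4$ together with a majority count to read off a support) which disentangles the point of interest from the uncontrolled images of the auxiliary points.
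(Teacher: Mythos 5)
Your proof is correct. There is, however, nothing in the paper to compare it against: the paper does not prove Theorem~\ref{citeB} at all, but imports it from Banakh--Guran--Protasov~\cite{Banakh} and only uses it (together with Theorem~\ref{hype}) to deduce Theorem~\ref{thmain1} and the dichotomy corollary. So your write-up is a self-contained reconstruction of the cited result, and it checks out in every step: the soft inclusion $\mathfrak{Z}^{\pm}(G)\subseteq\tau_p$ holds because $\tau_p$ is a Hausdorff group topology, so every polynomial preimage of $G\setminus\{1_G\}$ is open; the conjugation identity $g^{-1}t_{x,p}g=t_{(x)g,(p)g}$ is valid under the paper's left-to-right composition convention; the commutator $[\,g^{-1}t_{x,p}g,\,t_{y,r}\,]$ is genuinely a group polynomial of degree $4$ whose coefficients $t_{x,p},t_{y,r}$ lie in $\Sym_\omega(X)\subseteq G$ (this is precisely where the hypothesis on $G$ enters, and why the argument works for the whole interval of groups between $\Sym_\omega(X)$ and $\Sym(X)$); two transpositions fail to commute exactly when their supports meet in one point, so your majority count (at least three of the four distinct test points $r_i\neq y$ succeed iff $y\in\operatorname{supp}(g^{-1}t_{x,p}g)$, since at most one $r_i$ can be the second support point in the positive case and at most two can lie in the support in the negative case) is airtight; and the two auxiliary points $p_1\neq p_2$ pin down $(x)g=y$ by injectivity. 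The ``in particular'' clause follows as you say, since the subspace topology from the product of discrete spaces is zero-dimensional and Hausdorff. Your commutator-with-transpositions scheme is essentially the mechanism of the original proof in~\cite{Banakh}, so this is the expected route rather than a genuinely different one.
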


The formulating of the second key theorem requires two more definitions. 
\begin{definition}
 A topological space $X$ is called {\em hyperconnected} if the intersection of any two open nonempty subsets of $X$ is nonempty.   
\end{definition}
 Note that no infinite hyperconnected space is Hausdorff. 
\begin{definition}
    For an infinite set $X$ a subgroup $G$ of $\Sym(X)$ is said to have {\em no algebraicity} if for all finite $S\subseteq X$ and $q\in X\setminus S$, the set 
$\{(q)g: g\in G \hbox{ such that } (x)g=x \hbox{ for all }x\in S\}$ is infinite.
\end{definition}
 It is easy to check that the class of groups with no algebraicity includes all dense subgroups of $\Sym(X)$ with respect to the topology of pointwise convergence and, as such, all subgroups of $\Sym(X)$ that contain $\Sym_\omega(X)$; the automorphism group of the following relational structures: the random graph, \((\Q,\leq)\), or the random poset~\cite{PS2}; any group with a Rubin action on an infinite space~\cite{rubin}, in particular the Thompson groups \(F, T, V\), Grigorchuk’s group, and the homeomorphism groups of the following spaces: the Cantor set, the Hilbert cube, or any nonempty manifold of dimension at least~\(1\) (see~\cite{Thompson, Grigorchuk} for background). The second key theorem is the following.

\begin{theorem}\label{hype}
Let $G$ be a subgroup of $\Sym(X)$ with no algebraicity. Then $\mathfrak{Z}(G)$ is hyperconnected.
\end{theorem}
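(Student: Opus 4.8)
The plan is to prove hyperconnectedness via the reformulation that a space is hyperconnected if and only if every proper closed set has empty interior, and then to reduce this to the subbasic closed sets. Write $C_{f,g}=\{x\in G:(x)f=(x)g\}$ and $O_{f,g}=G\setminus C_{f,g}$ for semigroup polynomials $f,g$, so the $O_{f,g}$ form a subbasis for $\mathfrak{Z}(G)$. I would use the elementary fact that in any space a finite union of closed sets with empty interior has empty interior: if an open $U\subseteq C_1\cup C_2$ with $C_1$ closed, then $U\setminus C_1$ is open and contained in $C_2$, so it is empty, whence $U\subseteq C_1$. Since every $\mathfrak{Z}(G)$-closed set is an intersection of finite unions of sets $C_{f,g}$, and a proper such intersection must contain a proper finite union (all of whose members are then proper), monotonicity of the interior reduces everything to the claim that every proper $C_{f,g}$ has empty interior. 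Equivalently, I must show that every nonempty $O_{f,g}$ meets every nonempty basic open set $W=\bigcap_{j=1}^{k}O_{f_j,g_j}$.

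The easier half is topological. Each map $x\mapsto(x)f$ is continuous for the topology of pointwise convergence $\mathcal{T}_p$, and the diagonal of $G$ is $\mathcal{T}_p$-closed, so every $O_{f,g}$ is $\mathcal{T}_p$-open; hence $\mathfrak{Z}(G)\subseteq\mathcal{T}_p$ and $W$ is a nonempty $\mathcal{T}_p$-open set. Fixing $h\in W$, I can find a finite $F\subseteq X$ (taking a basic $\mathcal{T}_p$-neighbourhood of $h$ and closing $F$ up under $h$ and $h^{-1}$) so that any $\pi\in G$ with $\pi|_F=h|_F$ automatically lies in $W$. The whole problem thus becomes: produce $\pi\in G$ with $\pi|_F=h|_F$ and $(\pi)f\neq(\pi)g$.

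The key structural observation is that $f,g$ are \emph{semigroup} polynomials, so $(\pi)f=a_0\pi a_1\cdots\pi a_n$ and $(\pi)g=b_0\pi b_1\cdots\pi b_m$ involve only positive powers of $\pi$; consequently $\pi$ can be built by a purely forward, greedy extension that never prescribes a preimage. The engine is an Extension Lemma derived from no algebraicity: if a finite partial injection $\theta$ is realised by some $u\in G$ and $p\notin\dom\theta$, then $\theta\cup\{(p,y)\}$ is realised in $G$ for infinitely many $y$. Indeed, for $t$ in the pointwise stabiliser of $\im\theta$ the element $u'=ut$ still realises $\theta$, and $(p)u'=((p)u)t$ ranges over the orbit of $(p)u\notin\im\theta$ under that stabiliser, which is infinite by hypothesis. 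Starting from $\theta_0=h|_F$ (realised by $h$) and a point $q$ chosen generically off $F$, I would trace the two evaluation trajectories of $(\pi)f$ and $(\pi)g$; at every step where $\pi$ is applied to a point not yet in $\dom\theta$, I extend $\theta$ by a fresh image, using the infinitely many available choices to keep all visited points distinct, off $F$, and free of forced collisions, and at the last application I choose the image so that the two endpoints differ. The finite $\theta$ produced is realised by some $\pi\in G$, which by construction lies in $W\cap O_{f,g}$.

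The main obstacle is precisely this construction: establishing the Extension Lemma cleanly from no algebraicity and, above all, the bookkeeping that keeps both trajectories on fresh points so the endpoints can be forced apart while $\pi$ stays pinned to $h$ on $F$ and inside $G$. The degenerate cases $n=0$ or $m=0$, where one side is constant, must be handled separately, but they are easier since then only one trajectory depends on $\pi$.
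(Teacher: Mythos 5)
Your outer reduction is fine: hyperconnectedness does follow once every proper subbasic closed set $C_{f,g}$ has empty interior, so it suffices to meet every nonempty basic open $W$ with every nonempty $O_{f,g}$; and your Extension Lemma is exactly the paper's folklore lemma, proved the same way. The fatal step is the next one, where you trade membership in $W$ for the pointwise condition $\pi|_F=h|_F$. That condition is open in the topology of pointwise convergence, which is Hausdorff and in general strictly finer than $\mathfrak{Z}(G)$, so no hyperconnectedness-type statement can survive this trade; indeed the statement you reduce to --- for every finite $F$, every $h\in G$ and every nonempty $O_{f,g}$ there is $\pi\in G$ with $\pi|_F=h|_F$ and $(\pi)f\neq(\pi)g$ --- is false. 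Take $G=\Sym(\N)$ (it contains $\Sym_\omega(\N)$, hence has no algebraicity), $c=(0\,1)$, $(x)f=cx$ and $(x)g=xc$, so that $O_{f,g}=\{x\in G: cx\neq xc\}\neq\varnothing$ (it contains $(1\,2)$); take $W=\{x\in G: x\neq c\}$, $h=\id$ and $F=\{0,1\}$. This $F$ is a legitimate output of your pinning step: every $\pi$ fixing $0$ and $1$ differs from $c$ and so lies in $W$, and closing under $h^{\pm 1}=\id$ changes nothing. But any $\pi$ with $\pi|_F=\id|_F$ has support disjoint from $\{0,1\}=\operatorname{supp}(c)$ and therefore commutes with $c$, so no such $\pi$ lies in $O_{f,g}$, even though $W\cap O_{f,g}\neq\varnothing$: every element of $W\cap O_{f,g}$ must move $0$ or $1$, which is exactly what pinning forbids. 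Your greedy construction uses no property of $F$ beyond finiteness, so it cannot recover; concretely, in this example both trajectories from a generic $q$ pass through the single value $(q)\pi$, and the two endpoints $(q)\pi$ and $((q)\pi)c$ differ only when $(q)\pi\in\{0,1\}$ --- precisely the values that injectivity of $\pi$ and freshness rule out, so ``choose the last image so that the endpoints differ'' has no valid choice. (A smaller slip: closing a finite set under $h$ and $h^{-1}$ need not yield a finite set when $h$ has an infinite orbit.)

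The two ingredients your sketch never uses --- cancellativity of $G$ and nonemptiness of the sets being intersected --- are exactly what is missing. The paper first proves a normalization lemma (Lemma~\ref{SN nice zariski neighbourhood}): by cancelling equal leading coefficients (and padding constants on the right), every nonempty basic set can be rewritten so that in each inequation the leading coefficients differ and at least one side is nonconstant. This yields, for each inequation, a specific and in general non-generic witness point $m_i$ with $(m_i)a_{i,0}\neq(m_i)b_{i,0}$, so the two trajectories are separated from step $0$; the induction then protects these points (via the finite set $P$ of products of coefficients) and maintains separation of \emph{all} partial products throughout, instead of trying to create a difference at the last application. Because the witness points cannot be chosen generically, the second open set cannot be replaced by a finite pin: the paper instead keeps both systems of polynomial inequations alive simultaneously, stacking the two pairs of ragged matrices into one and running a single induction for the combined system. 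Your outer reduction (one full system plus a single inequation) could be retained, but the inequations defining $W$ must then be carried through the construction as constraints of the same polynomial kind, not collapsed to agreement with $h$ on a finite set.
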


It is easy to see that if a topology $\tau_1$ is contained in a topology $\tau_2$ and $\tau_2$ is hyperconnected, then so is $\tau_1$. Also, note that $\mathfrak{Z}^{\pm}_{3}(G)\subseteq \mathfrak{Z}(G)$ for each group $G$. Thus, Theorem~\ref{hype} implies the following.

\begin{corollary}\label{CorolD}
Let $G$ be a subgroup of $\Sym(X)$ with no algebraicity. Then $\mathfrak{Z}^{\pm}_{3}(G)$ is hyperconnected and thus non-discrete.
\end{corollary}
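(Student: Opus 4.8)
The plan is to obtain Corollary~\ref{CorolD} as an immediate consequence of Theorem~\ref{hype}, combined with the two elementary facts recorded just before its statement: that a topology contained in a hyperconnected topology is itself hyperconnected, and that $\mathfrak{Z}^{\pm}_{3}(G)\subseteq\mathfrak{Z}(G)$ for every group $G$. Concretely, Theorem~\ref{hype} gives that $\mathfrak{Z}(G)$ is hyperconnected; since $\mathfrak{Z}^{\pm}_{3}(G)$ is a coarser topology by the inclusion, hyperconnectedness descends to $\mathfrak{Z}^{\pm}_{3}(G)$. Thus the only content beyond Theorem~\ref{hype} is (i) the inclusion $\mathfrak{Z}^{\pm}_{3}(G)\subseteq\mathfrak{Z}(G)$ and (ii) passing from ``hyperconnected'' to ``non-discrete''.

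For step (ii) I would first observe that no-algebraicity forces $G$ to be infinite: taking $S=\emptyset$ in the definition shows that some (indeed every) $G$-orbit is infinite, so $X$ and hence $G$ are infinite, and in particular $|G|\geq 2$. If $\mathfrak{Z}^{\pm}_{3}(G)$ were discrete then two distinct singletons would be disjoint nonempty open sets, contradicting hyperconnectedness; hence $\mathfrak{Z}^{\pm}_{3}(G)$ is non-discrete. (Alternatively one may quote the remark in the excerpt that no infinite hyperconnected space is Hausdorff.)

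Step (i) is the only part requiring an actual argument, and it is where I expect any difficulty to lie. The idea is that each subbasic set $\{x\in G:(x)f\neq 1_G\}$ of $\mathfrak{Z}^{\pm}_{3}(G)$, with $f$ a group polynomial of degree at most $3$, can be rewritten as a subbasic set $\{x\in G:(x)u\neq(x)v\}$ of $\mathfrak{Z}(G)$ for semigroup polynomials $u,v$; one verifies $f(x)=1_G\iff(x)u=(x)v$, whence the two sets coincide. Writing $(x)f=a_0x^{\e_1}a_1x^{\e_2}a_2x^{\e_3}a_3$ and splitting into the eight cases for $(\e_1,\e_2,\e_3)\in\{-1,1\}^3$, I would clear the inverses of $x$ by moving negative occurrences of $x$ to the opposite side of the equation $f(x)=1_G$, isolating and inverting a single factor $x^{-1}$ when it is flanked by positive blocks; for example $a_0xa_1x^{-1}a_2xa_3=1_G$ rearranges to $x=a_2xa_3a_0xa_1$, and the all-negative case is handled by inverting the whole equation. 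In each sign pattern this yields inverse-free words $u,v$ of degree at most $3$, establishing the inclusion. The reason this succeeds precisely at degree $3$ is that, after possibly inverting the whole word, one of the two sign classes contains at most one occurrence of $x$ and so can be isolated; the interleaved degree-$4$ pattern $a_0xa_1x^{-1}a_2xa_3x^{-1}a_4$ is commutator-like and resists such linearisation, which is consistent with the strict inequality $\mathfrak{Z}(G)\neq\mathfrak{Z}^{\pm}(G)$ in Theorem~\ref{thmain1}. Since the excerpt records inclusion (i) as already known, the corollary then follows in two lines from Theorem~\ref{hype} and monotonicity of hyperconnectedness.
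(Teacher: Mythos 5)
Your proposal is correct and takes essentially the same route as the paper, which obtains Corollary~\ref{CorolD} immediately from Theorem~\ref{hype} together with the two observations recorded just before it (hyperconnectedness passes to coarser topologies, and $\mathfrak{Z}^{\pm}_{3}(G)\subseteq \mathfrak{Z}(G)$). Your sign-pattern analysis eliminating inverses from degree-at-most-$3$ group polynomials (e.g.\ $a_0xa_1x^{-1}a_2xa_3=1_G \iff x=a_2xa_3a_0xa_1$) checks out and merely fills in the inclusion that the paper asserts without proof, as does your observation that no algebraicity forces $G$ to be infinite, whence hyperconnectedness rules out discreteness.
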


Corollary~\ref{CorolD} implies a partial solution of the following question posed in~\cite[Question 2.2]{DikranjanToller}.  

\begin{question}[Dikranjan, Toller]
Let $G$ be an infinite group. Is $\mathfrak{Z}_{3}^{\pm}(G)$ always non-discrete?   
\end{question}

Also, Theorem~\ref{hype} and Corollary~\ref{CorolD} establish the sharpness of Theorem~\ref{citeB}.

\begin{corollary}
  Let $X$ be an infinite set and $G$ be a group such that $\Sym_\omega(X)\leq G\leq \Sym(X)$. Then the following dichotomy holds:
  \begin{enumerate}[\rm(i)]
  \item if $n\geq 4$, then $\mathfrak{Z}_n^{\pm}(G)$ is the topology of pointwise convergence and thus is totally disconnected;
  \item if $n\leq 3$, then $\mathfrak{Z}_n^{\pm}(G)$ is hyperconnected.
  \end{enumerate}
\end{corollary}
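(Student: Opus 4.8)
The plan is to assemble the corollary directly from the results already established, using only two elementary observations: that the topologies increase with the degree, i.e.\ $\mathfrak{Z}^{\pm}_n(G)\subseteq\mathfrak{Z}^{\pm}_m(G)$ whenever $n\leq m$, and that every group $G$ with $\Sym_\omega(X)\leq G\leq\Sym(X)$ has no algebraicity, as noted immediately after the definition of no algebraicity. Once these two facts are in hand, each half of the dichotomy reduces to squeezing between known topologies.

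For part~(i) I would argue as follows. For any $n\geq 4$ monotonicity yields the chain $\mathfrak{Z}^{\pm}_4(G)\subseteq\mathfrak{Z}^{\pm}_n(G)\subseteq\mathfrak{Z}^{\pm}(G)$, where the last inclusion holds since $\bigcup_{k\in\N}\mathfrak{Z}^{\pm}_k(G)$ is a basis for $\mathfrak{Z}^{\pm}(G)$. By Theorem~\ref{citeB} the two ends coincide, $\mathfrak{Z}^{\pm}_4(G)=\mathfrak{Z}^{\pm}(G)$, so all three topologies in the chain are equal. Hence $\mathfrak{Z}^{\pm}_n(G)$ is the topology of pointwise convergence, which Theorem~\ref{citeB} already records as totally disconnected.

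For part~(ii) I would first invoke that $G$ has no algebraicity, so that Corollary~\ref{CorolD} applies and $\mathfrak{Z}^{\pm}_3(G)$ is hyperconnected. For $n\leq 3$, monotonicity gives $\mathfrak{Z}^{\pm}_n(G)\subseteq\mathfrak{Z}^{\pm}_3(G)$, and since any topology contained in a hyperconnected topology is itself hyperconnected (the observation made just before Corollary~\ref{CorolD}), each $\mathfrak{Z}^{\pm}_n(G)$ with $n\leq 3$ is hyperconnected. This covers $n=0,1,2,3$ uniformly; the degenerate case $n=0$, where $\mathfrak{Z}^{\pm}_0(G)$ is the anti-discrete topology, is consistent and needs no separate treatment.

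Because the substantive analytic work is carried entirely by Theorem~\ref{citeB} and Theorem~\ref{hype}, I do not expect any genuine obstacle in the corollary itself; the only real content is that the dichotomy is \emph{sharp} precisely at the threshold $n=4$, and this is exactly what the collapse $\mathfrak{Z}^{\pm}_4(G)=\mathfrak{Z}^{\pm}(G)$ together with the hyperconnectedness at degree $3$ encode. The sole point requiring care is bookkeeping: correctly tracking the inclusions and confirming that the hypothesis $\Sym_\omega(X)\leq G$ indeed supplies the no-algebraicity condition demanded by Corollary~\ref{CorolD}.
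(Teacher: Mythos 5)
Your proposal is correct and matches the paper's intended argument exactly: the paper states this corollary without a written proof, as an immediate consequence of Theorem~\ref{citeB}, Corollary~\ref{CorolD}, monotonicity of the topologies $\mathfrak{Z}^{\pm}_n(G)$ in $n$, and the noted fact that any group containing $\Sym_\omega(X)$ has no algebraicity. Your bookkeeping of the inclusions and your handling of the degenerate case $n=0$ are both sound, so there is nothing to add.
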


Note that all the Zariski topologies introduced above depend exclusively on the algebraic structure of a given (semi)group. There are other topologies that reflect mainly algebraic properties of (semi)groups. Namely, the Hausdorff-Markov topology on groups and its semigroup counterparts.

\begin{definition}
Let $G$ be a group. The {\em group Hausdorff-Markov} topology on $G$ is the intersection of all Hausdorff group topologies on $G$.      
\end{definition}

A topology $\tau$ on a semigroup $S$ is called a {\em semigroup topology} if the semigroup operation is continuous in $(S,\tau)$.

\begin{definition}
Let $S$ be a semigroup. The {\em semigroup Hausdorff-Markov} topology $\mathcal {HM}(S)$ is the intersection of all Hausdorff semigroup topologies on $S$. The {\em semigroup Fr\'echet-Markov} topology $\mathcal {FM}(S)$ is the intersection of all $T_1$ semigroup topologies on $S$.     
\end{definition}

By the celebrated result of Markov~\cite{Markov}, for each countable group $G$, the group Zariski topology $\mathfrak{Z}^{\pm}(G)$ coincides with the group Hausdorff-Markov topology on  $G$. For each semigroup $S$ we have $\mathfrak{Z}(S)\subseteq \mathcal{HM}(S)$, see~\cite{main}. The following question was posed in~\cite[Question 2.7]{main}.

\begin{question}[Elliott, Jonu\v{s}as, Mesyan, Mitchell, Morayne, P\'{e}resse]\label{q4}
Is the semigroup Fr\'echet-Markov topology on a semigroup $S$ always contained in the semigroup Zariski topology on $S$?
\end{question}

Goffer and Greenfeld~\cite{GG} observed that the Shelah group $G$ constructed under CH in~\cite{Shelah} is an example of a group of cardinality $\aleph_1$ such that $\mathcal{FM}(G)$ is discrete, but $\mathfrak{Z}^{\pm}(G)$ is not discrete. In particular, the Shelah group gives a consistent negative answer to Question~\ref{q4}, as well as it  shows that the aforementioned results of Markov cannot be generalized over uncountable groups in ZFC. Recently, Poór and Rinot~\cite{PR} constructed a Shelah-like group $G$ of size $\aleph_1$ in ZFC. They showed that $\mathfrak{Z}^{\pm}(G)$ is not discrete, but every $T_1$ topology that turns $G$ into a topological group is discrete. Their proof however works also for any $T_1$ semigroup topology. Thus we have the following result that settles Question~\ref{q4} in ZFC. 

\begin{theorem}\label{shelah}
There exists a group $G$ of size $\aleph_1$ such that $\mathcal{FM}(G)$ is discrete, but $\mathfrak{Z}^{\pm}(G)$ is not discrete.    
\end{theorem}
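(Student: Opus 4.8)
The plan is to let $G$ be the Shelah-like group of cardinality $\aleph_1$ that Po\'or and Rinot~\cite{PR} construct in ZFC, and to verify the two required properties separately. That $\mathfrak{Z}^{\pm}(G)$ is not discrete is proved in~\cite{PR}, so I may simply quote it. The whole task thus reduces to showing that $\mathcal{FM}(G)$ is discrete. By definition $\mathcal{FM}(G)$ is the intersection of all $T_1$ semigroup topologies on $G$; since the discrete topology is one of them, it is enough to prove the sharper statement that \emph{every} $T_1$ semigroup topology on $G$ is discrete, for then the defining intersection is an intersection of discrete topologies and hence discrete.

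To prove this sharper statement I would reproduce the argument of~\cite{PR} that every $T_1$ \emph{group} topology on $G$ is discrete, inspecting each step to confirm that it invokes only the joint continuity of multiplication together with the $T_1$ separation axiom, and never the continuity of inversion. The structural fact that makes this plausible is that a semigroup topology on a group already renders every translation a homeomorphism: for $a\in G$ the left shift $l_a\colon x\mapsto ax$ is continuous because multiplication is continuous, and its two-sided inverse $l_{a^{-1}}$ is a shift of the same form, hence also continuous; the same holds for right shifts. Consequently the space $(G,\tau)$ is homogeneous, and the only standard feature of a group topology not automatically present is the passage from a neighborhood $U$ of $1_G$ to the neighborhood $U^{-1}$.

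The remaining ingredients of a Shelah-type argument are insensitive to this. On the one hand, for any neighborhood $U$ of $1_G$ and any fixed $k$, joint continuity of the $k$-fold product at $(1_G,\dots,1_G)$ yields a neighborhood $V$ of $1_G$ with $V^{k}\subseteq U$, which uses multiplication alone. On the other hand, the combinatorial core of the Po\'or--Rinot construction is purely algebraic: it rests on strong boundedness and cofinality properties of $G$ (of the kind asserting that suitably large subsets $W$ satisfy $W^{n_0}=G$ for a fixed $n_0$, while $G$ is not generated by any small subset), in which inversion plays no role. Feeding the estimate $V^{n_0}\subseteq U$ into such a covering property collapses every neighborhood of $1_G$ to all of $G$, so a nondiscrete $T_1$ semigroup topology would have to be indiscrete, contradicting $T_1$; hence there is none.

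The step I expect to be the main obstacle is the faithful transcription itself. One must check that nowhere does the original proof replace a neighborhood $U$ by $U^{-1}$, restrict attention to symmetric neighborhoods $U=U^{-1}$, or otherwise implicitly use continuity of inversion, and that the auxiliary step forcing neighborhoods of $1_G$ to be large enough to activate the covering property relies only on translations being homeomorphisms and on the cofinality of $G$, both of which survive the weakening from group to semigroup topologies. If the boundedness property in~\cite{PR} is stated for arbitrary large subsets this is immediate; if it is phrased only for symmetric sets, the extra work is to re-derive the estimate without forming inverses, which the homogeneity of $(G,\tau)$ should permit.
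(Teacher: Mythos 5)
Your overall strategy is exactly the paper's: take the Po\'or--Rinot group $G$ of size $\aleph_1$, quote from~\cite{PR} that $\mathfrak{Z}^{\pm}(G)$ is non-discrete, and prove that \emph{every} $T_1$ semigroup topology on $G$ is discrete by checking that the Po\'or--Rinot argument needs no continuity of inversion --- only shift-continuity (so translations, and hence conjugations $x\mapsto gxg^{-1}$, are homeomorphisms) and the estimate $V^{k}\subseteq U$ coming from continuity of multiplication. These are precisely the two verifications the paper makes.

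However, your reconstruction of the combinatorial core is only half of the actual argument, and the half you give cannot stand alone. The Shelah covering property applies only to sets of full cardinality: $A^{10120}=G$ requires $|A|=|G|=\aleph_1$. So feeding $V^{n_0}\subseteq U$ into the covering property collapses $U$ to $G$ only when $V$ is uncountable, and nothing in your sketch rules out a non-discrete $T_1$ semigroup topology in which $1_G$ has a \emph{countable} neighbourhood. The paper handles exactly this case with a second, genuinely different algebraic ingredient of the construction, which neither ``boundedness'' nor ``cofinality'' captures: for every countable subgroup $H$ of $G$ there exists $g\in G$ with $H\cap gHg^{-1}=\{1_G\}$. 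If $U$ is a countable open neighbourhood of $1_G$ and $H$ is the subgroup generated by $U$, then $U\cap gUg^{-1}\subseteq H\cap gHg^{-1}=\{1_G\}$, and since conjugation by $g$ is a homeomorphism (a composition of two shifts, so inversion continuity is not needed), $\{1_G\}$ is open and the topology is discrete by homogeneity. Only the resulting dichotomy --- some neighbourhood of $1_G$ countable (then discrete via disjoint conjugates), or all neighbourhoods of size $\aleph_1$ (then $V^{10120}=G\subseteq G\setminus\{g\}$ contradicts $T_1$) --- completes the proof. Your plan of faithfully transcribing~\cite{PR} would eventually surface this step, but as written your proposal omits it, and it is exactly the step where the second algebraic property of the group must also be checked to be inversion-free.
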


Gaughan~\cite{Gaughan} showed that the topology of pointwise convergence coincides with the Hausdorff-Markov topology on $\Sym(X)$. Banakh, Guran and Protasov~\cite{Banakh} showed that the topology of pointwise convergence coincides with the intersection of all Hausdorff shift-continuous topologies on $\Sym(X)$ such that for each $y\in \Sym(X)$ the function $\psi_y:\Sym(X)\rightarrow \Sym(X)$, $x\mapsto xyx^{-1}$ is continuous. 
In this paper, we complement the aforementioned results of Gaughan, and Banakh, Guran and Protasov.

\begin{theorem}\label{paramin}
For every set $X$ the semigroup Hausdorff-Markov topology on $\Sym(X)$ is the topology of pointwise convergence.   
\end{theorem}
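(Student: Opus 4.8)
Throughout write $G=\Sym(X)$ and let $\pi$ denote the topology of pointwise convergence on $G$. The plan is to prove the two inclusions $\mathcal{HM}(G)\subseteq\pi$ and $\pi\subseteq\mathcal{HM}(G)$ separately. The first is immediate: by \cref{citeB} the topology $\pi$ equals $\mathfrak{Z}^{\pm}(G)$ and in particular is a Hausdorff group topology, hence a Hausdorff \emph{semigroup} topology; thus $\pi$ is one of the topologies over which the intersection defining $\mathcal{HM}(G)$ is taken, and so $\mathcal{HM}(G)\subseteq\pi$. All the content lies in the reverse inclusion, which unwinds to the statement that \emph{every} Hausdorff semigroup topology $\tau$ on $G$ satisfies $\pi\subseteq\tau$; I fix such a $\tau$ for the rest of the argument.

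\textbf{Reduction to point stabilisers.} For $a\in X$ put $\mathrm{St}_a=\{g\in G:(a)g=a\}$. Since $\tau$ is a semigroup topology, all left and right translations are $\tau$-continuous, and as $l_g^{-1}=l_{g^{-1}}$ and $r_g^{-1}=r_{g^{-1}}$ each translation is a $\tau$-homeomorphism. A subbasis for $\pi$ is given by the sets $\{g:(a)g=b\}$ with $a,b\in X$, and for any $h$ with $(a)h=b$ one has $\{g:(a)g=b\}=\mathrm{St}_a\,h=r_h(\mathrm{St}_a)$; hence every subbasic $\pi$-open set is a $\tau$-homeomorphic image of some $\mathrm{St}_a$. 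Consequently $\pi\subseteq\tau$ follows once I show that \emph{each $\mathrm{St}_a$ is $\tau$-open}. As $\mathrm{St}_a$ is a subgroup and the translations are homeomorphisms, this is equivalent to $\mathrm{St}_a$ being a $\tau$-neighbourhood of $1_G$, equivalently to the assertion that no net $(g_i)$ with $(a)g_i\neq a$ for all $i$ converges to $1_G$ in $\tau$.

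\textbf{Available tools and their limits.} Two families of $\tau$-continuous self-maps of $G$ are at hand. First, every semigroup polynomial $x\mapsto a_0xa_1\cdots xa_n$ is $\tau$-continuous, being built from translations and the continuous multiplication. Second, for a \emph{fixed involution} $t\in G$ the conjugation $x\mapsto txt=txt^{-1}$ is $\tau$-continuous, since $t=t^{-1}$ makes it the composition $l_t\circ r_t$; this partially substitutes for the missing inversion. Because $\tau$ is Hausdorff, the equaliser $\{x:(x)f=(x)g\}$ of any two $\tau$-continuous maps is $\tau$-closed, so in particular $\mathfrak{Z}(G)\subseteq\tau$. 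This by itself cannot finish the proof: by \cref{hype} the topology $\mathfrak{Z}(G)$ is hyperconnected and hence far too coarse to separate $\mathrm{St}_a$ from its complement. The Hausdorff hypothesis must therefore be exploited well beyond the Zariski level, and the essential difficulty is that, in contrast to Gaughan's and the Banakh--Guran--Protasov settings, inversion need not be $\tau$-continuous, so no symmetric neighbourhoods of $1_G$ are available.

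\textbf{A reformulation via the join, and the main obstacle.} Let $\iota(g)=g^{-1}$ and $\tau^{-1}=\{U^{-1}:U\in\tau\}$. A short check gives $\iota\circ m\circ(\iota\times\iota)=m\circ\mathrm{swap}$, so $\tau^{-1}$ is again a Hausdorff semigroup topology on $G$, and in the join $\sigma=\tau\vee\tau^{-1}$ both multiplication and inversion are continuous; thus $\sigma$ is a Hausdorff \emph{group} topology and \cref{citeB} yields $\pi\subseteq\sigma$. Each $\mathrm{St}_a$ is inversion-invariant, so $\mathrm{St}_a\in\sigma$, and the theorem reduces to a one-sided descent: \emph{an inversion-invariant subgroup that is open in $\tau\vee\tau^{-1}$ is already open in $\tau$}. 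Concretely, $\sigma$-openness of $\mathrm{St}_a$ supplies $\tau$-neighbourhoods $U,V$ of $1_G$ with $U\cap V^{-1}\subseteq\mathrm{St}_a$, i.e.\ $(a)g=a$ whenever $g\in U$ and $g^{-1}\in V$, and the problem is to delete the constraint $g^{-1}\in V$. I expect this to be the crux. My plan is the two-limit method: given a net $g_i\to_\tau 1_G$ with $(a)g_i\neq a$, pass to a subnet along which the images $(a)g_i$ are either constant or escape every finite subset of $X$. In the constant case a transposition $t=(a\,d)$ carries the net into a single coset of $\mathrm{St}_a$, and one plays two $\tau$-continuous maps that agree along the net but disagree at $1_G$, contradicting Hausdorffness via a closed equaliser. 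In the escaping case one must build, from the net itself, an auxiliary permutation $w$ and combine right translations with involution-conjugations so that a suitable semigroup-polynomial expression in the $g_i$ is forced to equal a fixed non-identity permutation for all large $i$, while tending to $1_G$ by continuity---the desired contradiction. The escaping case, in which the only algebraic datum $(a)g_i$ runs off to infinity and inversion is unavailable, is where the infinitude of $X$ must genuinely be invested, and I expect it to be the main obstacle.
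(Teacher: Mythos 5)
Your proposal is correct up to a point, but it stops exactly where the theorem begins. The easy inclusion $\mathcal{HM}(\Sym(X))\subseteq\pi$, the reduction to showing that each stabiliser $\mathrm{St}_a=U_{a,a}$ is $\tau$-open, and the observation that $\sigma=\tau\vee\tau^{-1}$ is a Hausdorff \emph{group} topology (so that Theorem~\ref{citeB} yields $\tau$-neighbourhoods $U,V$ of $1$ with $U\cap V^{-1}\subseteq \mathrm{St}_a$) are all correct, and the join reformulation is a genuinely different framing from the paper's. But the ``one-sided descent'' — deleting the constraint $g^{-1}\in V$ — which you yourself identify as the crux, \emph{is} the theorem; neither of your two cases is actually carried out, and you explicitly label the escaping case an unresolved obstacle. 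So the proposal is a plan with a genuine gap at its core, not a proof.

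Moreover, the sketched cases are doubtful as described, because your two ``families of tools'' collapse into one: conjugation by an involution, $x\mapsto txt$, is itself a semigroup polynomial of degree $1$, and semigroup polynomials are closed under composition and pointwise product, so every map you can build is a semigroup polynomial and every equaliser you can form is closed in $\mathfrak{Z}(\Sym(X))$. Your constant-case argument therefore requires $1_{X}\notin\cl_{\mathfrak{Z}}\{g:(a)g=d\}$, i.e.\ a proper Zariski-closed set containing a coset of $\mathrm{St}_a$ and missing the identity; you give no reason such a set exists, and your own appeal to Theorem~\ref{hype} shows how coarse $\mathfrak{Z}(\Sym(X))$ is (note that hyperconnectedness does not decide this question, since hyperconnected spaces have plenty of proper closed sets, but the burden of exhibiting one containing the coset is on you). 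The paper's proof runs on three ingredients absent from your plan: Lemma~\ref{open iff closed}, that in \emph{any} semigroup topology $U_{x,x}$ is open iff closed — proved by taking $W$ with $WW\subseteq\Sym(X)\setminus U_{x,y}$, noting $(x)W\cap(y)W^{-1}=\varnothing$ so one of the complements has cardinality $|X|$, and conjugating $W$ to trap an open set inside a stabiliser; Lemma~\ref{maximal subsemigroup}, that $U_{x,x}$ is a \emph{maximal subsemigroup}, so ``not closed'' forces ``dense''; and Lemma~\ref{sstab(x,y) closed}, that the two-point setwise stabiliser $\Sym(X)_{\{x,y\}}$ is $\mathfrak{Z}$-closed, being the centraliser of the transposition $\phi_{x,y}$ — the one place Zariski-closedness is used, and a far more modest demand than closedness of a coset of $\mathrm{St}_a$. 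A density argument then makes $\Sym(X)_{\{x,y\}}$ open, and intersecting two such setwise stabilisers puts a nonempty open set inside the subgroup $U_{x,x}$, contradicting non-openness. If you want to rescue your route, you need either a proof of the descent from $\tau\vee\tau^{-1}$ to $\tau$, or a source of nontrivial $\tau$-closed sets beyond polynomial equalisers; the transposition-centraliser together with the maximality/density mechanism is exactly what fills that role in the paper.
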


\medskip

\section{Separating Zariski topologies on groups}

In this section, we prove Theorems~\ref{thcom} and~\ref{shelah}.


\begin{proof}[{\bf Proof of Theorem~\ref{thcom}}]
Let $F$ be the free commutative group over a set $\{x_n:n\in \mathbb N\}$. 
For each $k\in \N$ let $N_k$ be the subgroup of $F$ generated by the set $\{x^k_{2n}: n\in\mathbb N\}$. Let $G_k$ be the quotient group $F/N_k$. Finally, let $G$ be the algebraic sum 
$$\bigoplus_{k\in \N} G_k=\big\{f:\N\to \bigcup_{k\in \N}G_k: (k)f\in G_k\hbox { and }  |\{k\in \mathbb N:(k)f\neq 1\}|<\aleph_0\big\}.$$
It is clear that $\mathfrak{Z}_{n}(G)\subseteq \mathfrak{Z}_{m}(G)$ for all $n\leq m$. It is also clear that the topology $\mathfrak{Z}_0(G)$ is anti-discrete and $\mathfrak{Z}_1(G)$ is the cofinite topology. In particular, $\mathfrak{Z}_0(G)\neq \mathfrak{Z}_1(G)$.  
Hence it suffices to show that the topologies $\mathfrak{Z}_{n}(G)$ and 
$\mathfrak{Z}_{m}(G)$ are distinct, whenever $n<m$ are positive integers. Consider the following subset of $G$:
$$T_m=\big\{\phi\in G: (k)\phi=1 \hbox{ if }k\neq m, \hbox{ and }(m)\phi\in \{x_nN_m: n\in \mathbb N\}\big\}.$$

\begin{claim}
The subspace topology on $T_m$ inherited from $(G, \mathfrak{Z}_{n}(G))$ is the cofinite topology.
\end{claim}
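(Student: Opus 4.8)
The plan is to show that the subspace topology on $T_m$ coincides with the cofinite topology by establishing three facts: that $T_m$ is infinite, that every subbasic closed set of $\mathfrak{Z}_n(G)$ meets $T_m$ in $\emptyset$, a single point, or all of $T_m$, and that singletons of $T_m$ are closed. Since $G$ is commutative, as explained above $\mathfrak{Z}_n(G)=\mathfrak{Z}_n^{\pm}(G)$, and a closed subbasis is given by the solution sets $E_{j,c}:=\{x\in G: x^j=c\}$ for $0\le j\le n$ and $c\in G$ (negative exponents reducing to positive ones). Thus every closed subset of $T_m$ is an arbitrary intersection of finite unions of sets of the form $E_{j,c}\cap T_m$, so the three facts above immediately force such a set to be finite or equal to $T_m$.

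I would identify $T_m$ with the family of cosets $\{x_nN_m:n\in\N\}$ placed in the $m$-th coordinate, writing $\phi_n$ for the element with $(m)\phi_n=x_nN_m$. First I would check these cosets are pairwise distinct, so $T_m$ is infinite: for $n\ne n'$ the element $x_nx_{n'}^{-1}$ has a generator-exponent equal to $\pm1$, whereas every element of $N_m$ has all odd-generator exponents zero and all even-generator exponents divisible by $m\ge2$; hence $x_nx_{n'}^{-1}\notin N_m$. Next, for the intersection step, observe that $\phi_n^{\,j}$ is supported only at coordinate $m$ with value $x_n^{\,j}N_m$. Consequently $E_{j,c}\cap T_m=\emptyset$ unless $c$ is supported at $m$, say $(m)c=\gamma$, in which case $E_{j,c}\cap T_m=\{\phi_n: x_n^{\,j}N_m=\gamma\}$. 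For $j=0$ this set is $T_m$ (if $\gamma=1$) or $\emptyset$, as required.

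The crux is the case $1\le j\le n$, where I must show $\{\phi_n: x_n^{\,j}N_m=\gamma\}$ has at most one element, i.e.\ that $n\mapsto x_n^{\,j}N_m$ is injective. This is precisely where the hypothesis $n<m$ is used: for $n\ne n'$ the element $x_n^{\,j}x_{n'}^{-j}$ has a generator-exponent equal to $\pm j$ with $0<j<m$, so it can lie in $N_m$ only if that exponent were $0$ (at an odd position) or divisible by $m$ (at an even position), both impossible since $0<j<m$. Hence $x_n^{\,j}N_m\ne x_{n'}^{\,j}N_m$, giving at most one solution. This injectivity statement is the main obstacle, being the only place where the arithmetic of the construction, together with the bound $j\le n<m$, actually enters.

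Finally I would assemble the pieces. Each $E_{j,c}\cap T_m$ is $\emptyset$, a singleton, or $T_m$; a finite union of such sets is therefore finite or equal to $T_m$, and an arbitrary intersection of sets of this kind is again finite or $T_m$. Thus the only closed subsets of $T_m$ are the finite sets and $T_m$ itself. Since taking $j=1$ and $c=\phi_n$ yields $E_{1,c}\cap T_m=\{\phi_n\}$, every singleton is closed, so the subspace topology contains the cofinite topology; combined with the previous sentence and the infinitude of $T_m$, it equals the cofinite topology.
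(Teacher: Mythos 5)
Your proof is correct, and it follows the same basic strategy as the paper's: reduce to the traces on $T_m$ of the subbasic closed sets, which by commutativity are solution sets of equations $x^j=c$ with $0\le j\le n<m$, and show these traces are small using the exponent structure of $N_m$ (odd-indexed generator exponents zero, even-indexed exponents divisible by $m$). The difference lies in how smallness is obtained. The paper fixes the constant: if $ax^p=1$ has a solution in $T_m$, then $a$ is supported at coordinate $m$, say $(m)a=hN_m$, and any solution $x_iN_m$ forces the generator $x_i$ to occur in the word $h$; hence there are at most finitely many solutions, bounded by the number of letters of $h$. You instead compare two candidate solutions directly: $x_i^{\,j}N_m=x_{i'}^{\,j}N_m$ would force $x_i^{\,j}x_{i'}^{-j}\in N_m$, which is impossible for $i\neq i'$ since $0<j<m$; so the map $i\mapsto x_i^{\,j}N_m$ is injective and each subbasic closed set meets $T_m$ in at most one point. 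Your version is slightly stronger (at most one solution rather than finitely many) and avoids reasoning about a representative word for the constant, at no extra cost; both arguments rest on exactly the same arithmetic and the same use of the hypothesis $j\le n<m$. You also make explicit several steps the paper leaves implicit: that $T_m$ is infinite (the cosets $x_iN_m$ are pairwise distinct), that singletons are closed so the subspace topology contains the cofinite one, and the assembly from subbasic traces through finite unions and arbitrary intersections to all closed sets.
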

\begin{proof}
Fix a semigroup polynomial $(x)f=ax^p$, where $a\in G$ and $p\leq n$. Assume that there exists $x\in T_m$ such that $(x)f=1$. Note that to prove the claim it suffices to show that the set $\{x\in T_m: (x)f=1\}$ is finite. By the choice of $x$, we have \(a=x^{-p}\). Then, by the definition of $T_m$, $(k)a=((k)x)^{-p}=1$ whenever $k\neq m$.
Hence we can restrict our attention to the equation $(m)ax^p=1$ within $G_m$ where $(m)a=hN_m$ for some word $h\in F$. 
Since for every $n\in\N$, $(m)ax_n^p=hN_mx_n^p=hx_n^pN_m$ (multiplication is taken within $F$), we have that $(m)ax_n^p=N_m$ if and only if $hx_n^p\in N_m$. 
For convenience, we identify the group \(G_m\) with the subgroup $\{\phi\in G: (k)\phi=1 \hbox{ for all }k\neq m\}$ of \(G\). Then the element $x_nN_m\in T_m\subseteq G_m$ is a solution to \((x)f=1\) if and only if $hx_n^p\in N_m$.

Suppose that \(x_nN_m\in T_m\) is such that $hx_n^p\in N_m$. If the integer $n$ is even, then $h$ should contain either the letter $x_n$ or the word $x_n^{-p}$, because the number of $x_n$'s in the word $hx_n^p$ should be a multiple of $m$ and $p\leq n<m$. If \(n\) is odd, then \(h\) must contain the word \(x_n^{-p}\), by the definition of $N_m$.
Since $h$ contains only finitely many letters, viewing \(T_m\) as a subset of \(G_m\), we get
\[\big|\makeset{x_nN_m\in T_m}{\(hx_{n}^p\in N_m\)}\big|\leq \big|\makeset{x_n}{\(x_n\) or $x_n^{-1}$ is a letter of \(h\)}\big|<\aleph_0 .\] 
Thus the equation $ax^p=1$ has finitely many solutions in \(T_m\), as required. 
\end{proof}
On the other hand, 
$$\{x\in G:x^m=1\}\cap T_m= \{f\in T_m: (m)f=x_{2n}N_m, n\in\mathbb N\},$$ 
witnessing that the set $\{f\in T_m: (m)f=x_{2n}N_m, n\in\mathbb N\}$ is closed in $T_m$ with respect to the subspace topology inherited from $(G,\mathfrak{Z}_{m}(G))$. In particular, it follows that the subspace topology on $T_m$ inherited from $(G,\mathfrak{Z}_{m}(G))$ is not cofinite.
Hence the topologies $\mathfrak{Z}_{n}(G)$ and $\mathfrak{Z}_{m}(G)$ are distinct, as required. 
\end{proof}

\begin{remark}
Note that the group \(G_k\) from the proof of Theorem~\ref{thcom} is isomorphic to the direct sum of countably infinitely many copies of $\mathbb{Z}\times \mathbb{Z}_{k}$. Thus,
the group $G$ from Theorem~\ref{thcom} is isomorphic to the direct sum of countably infinitely many copies of all cyclic groups.
\end{remark}

Recall that for a positive integer $n$ a group $G$ is called {\em n-Shelah} if for each $A\subseteq G$ such that $|A|=|G|$ we have $A^n=G$.
Recently, Poór and Rinot~\cite{PR} for every regular cardinal $\kappa$ constructed a 10120-Shelah group of size $\kappa^+$ in ZFC. 
Let $G$ be a 10120-Shelah group of size $\aleph_1$ constructed in~\cite{PR}. By~\cite[Lemma 5.11]{PR}, $\mathfrak{Z}^{\pm}(G)$ is not discrete, but every $T_1$ topology that turns $G$ into a topological group is discrete. In the following we explain why their proof works also for any $T_1$ semigroup topology. 

 
\begin{proof}[{\bf Proof of Theorem~\ref{shelah}}] 
Let $G$ be the 10120-Shelah group of size $\aleph_1$ constructed by Poór and Rinot in~\cite{PR}. As we mentioned above, the group Zariski topology $\mathfrak{Z}^{\pm}(G)$ is not discrete. 
To show that the semigroup Fr\'{e}chet-Markov topology $\mathcal {FM}(G)$ is discrete, consider any $T_1$ semigroup topology $\tau$ on $G$. If $1_G$ has a countable open neighborhood $U$ in $(G,\tau)$, then define $H$ to be a (countable) subgroup of $G$ generated by $U$. By the construction of $G$ (see~\cite{PR}), there exists $g\in G$ such that $H\cap gHg^{-1}=\{1\}$. Since the topology $\tau$ is shift-continuous, the conjugation map $\psi_g: G\rightarrow G$, $x\mapsto gxg^{-1}$ is a homeomorphism of $G$. It follows that $gUg^{-1}$ is an open neighborhood of $1_G$ in $(G,\tau)$. But then $\{1_G\}=U\cap gUg^{-1}\in\tau$, witnessing that $\tau$ is the discrete topology. 

Assume that every open neighborhood of $1_G$ in $(G,\tau)$ is uncountable and thus of cardinality $\aleph_1$. Fix any $g\in G\setminus \{1_G\}$. Since the space $(G,\tau)$ is $T_1$ the set $U=G\setminus\{g\}$ is an open neighborhood of $1_G$. The continuity of the multiplication in $G$ yields the existence of an open neighborhood $V$ of $1_G$ such that $V^{10120}\subseteq U$. Since $G$ is a 10120-Shelah group and $|V|=|G|$, we have $g\in G=V^{10120}\subseteq U$, which contradicts the choice of $U$. The obtained contradiction implies that the semigroup Fr\'echet-Markov topology on $G$ is discrete.
\end{proof}

\section{The semigroup Zariski topology on groups with no algebraicity}

In this section, we are going to prove Theorem~\ref{hype} which together with Theorem~\ref{citeB} establishes Theorem~\ref{thmain1}.  Let $G$ be a group.
Recall that every basic open set $U\in \mathfrak{Z}(G)$ is specified by two finite tuples $(p_0, \dots, p_{n-1})$ and $(q_0, \dots, q_{n-1})$ of semigroup polynomials over $G$ where 
$$U=\set{x \in G}{(x)p_i\neq (x)q_i \text{ for } 0\leq i \leq n-1}.$$
For the purposes of a concise and convenient notation, we will specify such open sets $U$ via pairs of ``ragged matrices" which hold the coefficients of the corresponding semigroup polynomials. 
\begin{definition}
    An {\em $n$-rowed ragged matrix} \(R\) over a semigroup \(S\) is a $n$-tuple $(r_0, \dots, r_{n-1})$ in which every $r_i$ is a $(d_{R,i}+1)$-tuple of elements of $S$, i.e., $r_i =(r_{i,0}, r_{i,1}\ldots r_{i,d_{R,i}})$. We refer to $r_0, \dots,r_{n-1}$ as the {\em rows} of $R$ and write $r_{i,j}$ for the $j$-th entry of $r_i$ (when it exists).
    We will forbid ourselves from denoting a matrix by anything other than an upper-case English letter, so that the corresponding lower-case letter can represent the entries in this fashion.
    Given $x\in S$ we write $Rx$ for the $n$-tuple $(R_0(x), \dots, R_{n-1}(x))$ where
\(R_i(x)= r_{i,0}x\ldots xr_{i, d_{R, i}}.\)
For example
\[\begin{bmatrix}
    a & b & c\\
     d & e &
\end{bmatrix}x=\begin{bmatrix}
    a x b x c\\
    d x e 
\end{bmatrix}.\]
Observe that the semigroup polynomial $R_i(x)$ has degree $d_{R,i}$.
\end{definition}
Given any $n$-rowed ragged matrices $A$ and $B$ over $S$, we let
\[N_{A,B}=\makeset{x\in S}{\(Ax\) and \(Bx\) differ in every coordinate}.\]
Let $U$ be a basic open set in the semigroup Zariski topology on $S$ given by two tuples of polynomials $(p_0, \dots, p_{n-1})$ and $(q_0, \dots, q_{n-1})$ where $(x)p_i=a_{i,0}xa_{i,1}x\dots xa_{i,d_{A,i}}$ and $(x)q_i=b_{i,0}xb_{i,1}x\dots xb_{i,d_{B,i}}$. Then $U=N_{A,B}$ where $A$ and $B$ are the $n$-rowed ragged matrices with $i$-th rows $(a_{i,0}, \dots, a_{i,d_{A,i}})$ and $(b_{i,0}, \dots, b_{i,d_{B,i}})$, respectively.

In the following, we identify a positive integer $n$ with the set $\{0,\ldots, n-1\}$.

\begin{lemma}\label{SN nice zariski neighbourhood}
Let \(S\) be a cancellative monoid.
The semigroup Zariski topology on \(S\) has a basis consisting of the sets \(N_{A, B}\) where \(A, B\) are ragged matrices over \(S\) such that
\begin{enumerate}
    \item \(d_{A,i} > 0\) or \( d_{B,i}>0\),
      \item \(a_{i, 0} \neq b_{i, 0}\),
\end{enumerate}
for every $i\in k$ where \(k\) is the number of rows of \(A\) and \(B\).
\end{lemma}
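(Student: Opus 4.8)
The plan is to reduce everything to a single pair of polynomials and then normalise that pair by cancellation. First I would record the structural fact that the sets $N_{A,B}$ satisfying conditions (1) and (2) are closed under finite intersection: if $N_{A,B}$ and $N_{C,D}$ satisfy (1) and (2) in every row, then $N_{A,B}\cap N_{C,D}=N_{E,F}$, where $E,F$ stack the rows of $A,C$ and of $B,D$, and every row of $E,F$ still satisfies (1) and (2). Since a standard basic open set $N_{A,B}$ is the intersection of its single-row sets $\{x\in S:(x)p\neq (x)q\}$ (one pair $(p,q)$ of polynomials per row), it suffices to prove that every such single-row set equals $\emptyset$, the whole space $S$, or a single-row set of the required form. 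The empty set is the empty union of basic sets and so needs no representation, and (assuming $|S|\geq 2$, the case $|S|=1$ being trivial) fixing any $d\in S\setminus\{1\}$ we have $S=\{x\in S: 1\cdot x\neq d\cdot x\}$ by right cancellation, which already has the required form. Throughout I would use that $S$ is two-sided cancellative.

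Now fix a single pair, say $(x)p=a_0xa_1\cdots xa_n$ and $(x)q=b_0xb_1\cdots xb_m$ of degrees $n$ and $m$. The main step is a prefix-cancellation reduction of $\{x:(x)p\neq (x)q\}$. Reading coefficients from the left, I would cancel the longest common prefix: whenever the two current leading coefficients coincide, say both equal $c$, left-cancel $c$ (legitimate since $cu=cv\iff u=v$); once both polynomials then begin with the variable, cancel that occurrence of $x$ as well, using that for each fixed $x\in S$ the element $x$ is left-cancellable, so $\{x: x\,u(x)\neq x\,v(x)\}=\{x:u(x)\neq v(x)\}$. Each cancellation is an equality of subsets of $S$, so the process transforms the set into one of the same form for shorter polynomials, and halts either at the first index $k$ where $a_k\neq b_k$ or when one polynomial is exhausted.

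Finally I would read off the terminal cases. If the reduction stops because $a_k\neq b_k$ with $k\le\min(n,m)$, the surviving pair has distinct leading coefficients, giving condition (2); provided $k<\max(n,m)$ at least one tail is non-constant, giving condition (1), so we land in the required form, whereas if $k=n=m$ both tails are the distinct constants $a_n,b_n$ and the set is all of $S$. If instead the coefficient sequences agree as far as they overlap, then (swapping $p,q$ so that $n\leq m$) either $n=m$ and the set is empty, or $n<m$ and cancelling all of $p$ leaves $\{x: 1\neq (x)r\}$ with $(x)r=x b_{n+1}\cdots x b_m$ of positive degree $m-n$ and leading coefficient $1$. This last case is the genuine obstacle: both leading coefficients are now $1$, so condition (2) fails. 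I would resolve it by right-multiplying by the fixed $d\neq 1$: since $(x)r=1\iff (x)r\cdot d=d$ by right cancellation, the set equals $\{x: d\neq (x)r\cdot d\}$, whose polynomials are the constant $d$ (leading coefficient $d$) and $(x)r\cdot d$ (degree $m-n>0$, leading coefficient $1\neq d$), so both (1) and (2) hold. Assembling these, every single-row set is $\emptyset$, $S$, or of the required form; intersecting over the rows (discarding the $S$-factors and stacking the rest) shows every $N_{A,B}$ is a union of sets of the required form, and since each such set is itself open this yields the claimed basis.
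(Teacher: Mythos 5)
Your proof is correct, and its algebraic core is the same as the paper's: both arguments rest on exactly two moves, namely left-cancelling a common prefix of the two polynomials (coefficients, and the variable $x$ pointwise, using two-sided cancellativity), and, when one side has been reduced to a constant sharing its leading coefficient with the other side, right-multiplying both polynomials by a fixed element $d\neq 1$ to force the leading coefficients apart. The difference is organizational. The paper keeps all rows of a nonempty $N_{A,B}$ together and invokes lexicographic minimality of the tuple (number of rows, degrees) over all representations of the same set: a minimal pair satisfies condition (1) because a row of two distinct constants can be deleted, and a minimal pair violating condition (2) in some row is either impossible (prefix cancellation would shorten it, contradicting minimality) or repairable by the right-multiplication trick. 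You instead decompose each basic set into its single-row sets, run the prefix cancellation explicitly until it terminates, classify the terminal states ($\emptyset$, all of $S$, or a pair of the required form), and re-stack the surviving rows. Your version is more algorithmic and makes explicit two points the paper leaves implicit --- that $\emptyset$ needs no representation and that $S$ itself is a set of the required form --- while the paper's minimality wrapper buys brevity by avoiding the termination and terminal-case bookkeeping. Both arguments tacitly require $|S|\geq 2$ in order to choose the non-identity element; you at least flag this explicitly, and for $|S|=1$ the statement is degenerate in either treatment.
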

\begin{proof}
By the definition of the semigroup Zariski topology, the nonempty sets of the form \(N_{A,B}\), where \(A,B\) are ragged matrices with the same amount of rows, form a basis in $(S,\mathfrak{Z}(S))$. We show that if \(N_{A,B}\neq \varnothing\), then \(N_{A, B}=N_{P, Q}\) for some \(P, Q\) satisfying conditions (1) and (2).

For each pair of ragged matrices \((A, B)\) over \(S\) with \(k\) rows each, let \(t_{A, B}\) be the tuple \[(k, d_{A,0}, \ldots, d_{A,k-1}, d_{B,0}, \ldots, d_{B,k-1}).\] 
For a given pair of $k$-rowed ragged matrices $(A,B)$ such that $N_{A,B}\neq\varnothing$, let $\mathcal{PQ}$ be the set of all pairs \((P, Q)\) of $k$-rowed ragged matrices such that \(t_{P, Q}\) is the minimum element of
\(\makeset{t_{P, Q}}{ \(N_{P, Q}= N_{A, B}\)}\),
where we order these tuples lexicographically.


We claim that each pair $(P,Q)\in\mathcal{PQ}$ satisfies condition (1). 
Suppose to the contrary that \(d_{P,i}=d_{Q,i}=0\) for some \(i\in k\).
As \(N_{P, Q}\) is not empty, we have \(p_{i, 0}\neq q_{i, 0}\).
Thus
\(N_{P, Q} =N_{A, B}\)
where \(A, B\) are obtained from \(P, Q\) by deleting the \(i^{th}\) row, 
which contradicts the minimality of \(k\).

Let us check that there exists a pair $(P',Q')\in\mathcal{PQ}$ that satisfies condition (2). 
Fix any $(P,Q)\in \mathcal{PQ}$. If \(p_{i, 0}\neq q_{i, 0}\) for each $i\in k$, we are done. Otherwise, let $I=\{i\in k: p_{i, 0}= q_{i, 0}\}$. For each $i\in I$ the following three cases require consideration:
\begin{enumerate}[\rm(i)]
\item \(d_{P,i} \neq 0\) and \(d_{Q,i}\neq 0\);
\item \(d_{P,i} = 0\) and \(d_{Q,i}\neq 0\);
\item \(d_{P,i} \neq 0\) and \(d_{Q,i}= 0\).
\end{enumerate}

(i) As \(S\) is cancellative for all \(x\in S\),
\[p_{i,0}xp_{i, 1}x \ldots xp_{i, d_{P,i}} \neq q_{i,0}xq_{i, 1}x \ldots xq_{i, d_{Q,i}}\]
if and only if 
\[p_{i, 1}x \ldots xp_{i, d_{P,i}} \neq q_{i, 1}x \ldots xq_{i, d_{Q,i}}.\]
The latter polynomials have smaller degree contradicting the minimality of \(d_{P,i}\) and \(d_{Q,i}\). Hence case~(i) is impossible.

(ii) Let \(f\in S\setminus \{1_{S}\}\).
Then
\[p_{i, 0} \neq Q_{i}(x)\qquad \iff \qquad p_{i, 0}f \neq Q_{i}(x)f.\]

In this case we redefine \(p_{i, 0}:= 
p_{i, 0}f\) and \(q_{i, d_{Q,i}}:= q_{i, d_{Q,i}}f\). 
Note that \(p_{i,0}=p_{i, 0}1_S\neq p_{i, 0}f\) by cancellativity.

Case (iii) is similar to case (ii).

It is clear that the described above process of redefining entries of $P$ and $Q$ gives us a pair $(P',Q')\in\mathcal{PQ}$ that satisfies condition (2).
\end{proof}

For a partial bijection $f$ of a set $X$ let $\dom(f)$ be the domain of $f$ and $\im(f)$ be the image of $f$. 
The following result is folklore. Nevertheless, for the sake of completeness we provide its short proof.

\begin{lemma}[Folklore]
A subgroup $G$ of $\Sym(X)$ has no algebraicity if and only if for each finite partial bijection \(b\) of \(X\) with an extension in \(G\) and for all \(q\in X\backslash\dom(b)\), there is an infinite subset \(A\subseteq X\) such that for each $a\in A$ the partial bijection \(b\cup \{(q, a)\}\) has an extension in \(G\).  
\end{lemma}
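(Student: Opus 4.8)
The plan is to prove the equivalence by unwinding both directions purely from the definitions, since this ``folklore'' statement is essentially a reformulation that promotes the no-algebraicity condition from a single fixed point to an arbitrary finite partial bijection extendable in $G$. The key observation driving both directions is that the original no-algebraicity condition is stated for the identity on a finite set $S$ (i.e.\ for elements of $G$ fixing $S$ pointwise), whereas the new formulation allows an arbitrary finite partial bijection $b$; one translates between these by composing with a fixed extension of $b$.

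\medskip

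\emph{First} I would prove the forward direction ($\Rightarrow$). Assume $G$ has no algebraicity, fix a finite partial bijection $b$ with an extension $g\in G$, and fix $q\in X\setminus\dom(b)$. Set $S=\im(b)$ (a finite set) and $p=(q)g$. Since $g$ extends $b$ and $q\notin\dom(b)$, we have $p\notin S$, so by no algebraicity the set $\{(p)h:h\in G,\ h|_S=\id_S\}$ is infinite. For each such $h$ fixing $S$ pointwise, I would consider the element $g h\in G$: it still extends $b$ (because $b\subseteq g$ maps into $S$, which $h$ fixes), and it sends $q$ to $(p)h$. Thus $b\cup\{(q,(p)h)\}$ extends to $gh\in G$. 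Letting $A=\{(p)h:h\in G,\ h|_S=\id_S\}$, which is infinite, gives exactly the desired infinite set of possible images of $q$, with each $b\cup\{(q,a)\}$ extendable in $G$.

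\medskip

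\emph{Second} I would prove the converse ($\Leftarrow$), which is the easier direction and really just a special case. Assume the partial-bijection condition holds. To verify no algebraicity, take a finite $S\subseteq X$ and $q\in X\setminus S$. Apply the hypothesis to the finite partial bijection $b=\id_S$ (the identity on $S$), which has the identity of $G$ as an extension, and to the point $q\in X\setminus\dom(b)=X\setminus S$. This yields an infinite set $A$ such that for each $a\in A$ the map $\id_S\cup\{(q,a)\}$ extends to some $g_a\in G$. Each such $g_a$ fixes $S$ pointwise and sends $q$ to $a$, so $A\subseteq\{(q)g:g\in G,\ g|_S=\id_S\}$; since $A$ is infinite, this set is infinite, which is precisely no algebraicity.

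\medskip

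The proof is mostly bookkeeping, so there is no serious obstacle; the one point requiring care is the composition $gh$ in the forward direction. I must check that $gh$ genuinely extends $b$, which relies on the fact that $b$ maps into $S=\im(b)$ and $h$ fixes $S$ pointwise, so $(x)(gh)=((x)g)h=(x)g=(x)b$ for $x\in\dom(b)$ (using left-to-right composition as fixed in the paper). I would also state explicitly that injectivity of the correspondence $h\mapsto(p)h$ is not needed: it suffices that the \emph{image} $A=\{(p)h\}$ is infinite, which is given directly by no algebraicity, so there is no issue with distinct $h$ collapsing to the same image.
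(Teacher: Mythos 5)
Your proposal is correct and follows essentially the same argument as the paper: in the forward direction both proofs set $S=\im(b)$, push $q$ forward to $(q)g\notin S$, invoke no algebraicity at that pair, and compose the extension of $b$ with elements of $G$ fixing $S$ pointwise; in the converse both simply specialise $b$ to a partial identity. The only difference is presentational — you spell out the verification that $gh$ extends $b$ and that injectivity of $h\mapsto(p)h$ is unnecessary, points the paper leaves implicit.
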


\begin{proof}
 ($\Rightarrow$) Fix a finite partial bijection $b$ that has an extension $g_b\in G$ and $q\in X\setminus \dom(b)$. Let $S=\im(b)$ and $q'=(q)g_b$ and $\id_S$ be the identity permutation of $S$. Since $G$ has no algebraicity and $q'\notin S$, there exists an infinite subset $A$ of $X$ such that for each $a\in A$ the partial bijection $\id_S\cup\{(q',a)\}$ extends to an element $g_a$ of $G$. For each $a\in A$ we have $g_bg_a\in G$, $(q)g_bg_a=(q')g_a=a$ and $(g_bg_a){\restriction}_{\dom(b)}=b$.   

 The implication ($\Leftarrow$) follows by choosing $b$ to be a partial identity map. 
\end{proof}





We are in a position to prove Theorem~\ref{hype}. We need to show that for each subgroup $G$ of $\Sym(X)$ with no algebraicity, the semigroup Zariski topology $\mathfrak Z(G)$ is hyperconnected.

\begin{proof}[{\bf Proof of Theorem~\ref{hype}}]
It suffices to check that $U\cap V\neq \varnothing$ for each $U,V$ from the basis $\mathcal B$ of $\mathfrak{Z}(G)$ defined in Lemma~\ref{SN nice zariski neighbourhood}.
Consider \(U=N_{A_1, B_1}\in \mathcal B\) and \(V=N_{A_2, B_2}\in\mathcal B\).
We define \(A\) to be the ragged matrix whose rows consist of the rows of \(A_1\) followed by the rows of \(A_2\). The ragged matrix $B$ is defined similarly, by stacking the rows of the matrices $B_1$ and $B_2$. It follows from the definition of \(N_{A, B}\) that 
\[N_{A, B}=N_{A_1, B_1}\cap N_{A_2, B_2}.\]
So it suffices to show that \(N_{A, B}\) is nonempty.

Let \(k\) be the number of rows of \(A\) (or equivalently \(B\)).
By the assumption, for each $i\in k$ the permutations $a_{i, 0}$ and $b_{i, 0}$ are distinct. So, for all \(i\leq k\) there exists \(m_i \in X\) such that \((m_i)a_{i, 0}\neq (m_i)b_{i, 0}\).
Let \(C\) be the set of permutations of \(X\) that occur as entries in either \(A\) or \(B\).
Let $$P :=\{1_X\}\cup  C\cup C^{-1} \cup CC\cup CC^{-1}\cup C^{-1}C\cup C^{-1}C^{-1}.$$ 
Note that \(P\) is finite. 

We compose partial bijections of $X$ as relations and denote this composition by concatenation. In particular, we will do this with elements of $G$.
We define a (finite) sequence of partial bijections \(x_j:X \to X\) inductively to satisfy the following properties:
\begin{enumerate}
\item \(|x_j| = j\);
\item for \(l< j\), \(x_l \subseteq x_j\);
    \item if \(i\in k\) and \(j, L, R\in \N\) are such that \(x_j\) is defined and 
    \(m_i\in \operatorname{dom}(a_{i, 0}x_{j} \ldots x_{j} a_{i, L})\cap \operatorname{dom}(b_{i, 0}x_{j} \ldots x_{j} b_{i, R}),\)
    then we have
    \[(m_i) a_{i, 0}x_{j} \ldots x_{j} a_{i, L} \neq (m_i) b_{i, 0}x_{j} \ldots x_{j} b_{i, R};\]
    \item for all \(j\) such that $x_j$ is defined, we have $\{(y)f: y\in\im(x_j), f\in P\}\cap \makeset{m_{p}}{\(p\in k\)} = \varnothing$;
    \item \(x_i\) can be extended to an element of \(G\).
\end{enumerate}
For each $i\in k$, if $x_j$ is defined, then let $p(a,j,i)$ be the maximal number such that 
    $$m_i \in \operatorname{dom}(a_{i, 0}x_{j}\ldots x_{j} a_{i, p(a,j,i)}).$$ Similarly, let $p(b,j,i)$ be the maximal number such that 
    $m_i \in \operatorname{dom}(b_{i, 0}x_{j}\ldots x_{j} b_{i, p(b,j,i)})$.
We start by defining \(x_0:= \varnothing\). 
Note that conditions (1), (2), (4), and (5) are immediately satisfied.
Moreover (3) is vacuously satisfied if \(L\neq 0\) or \(R\neq 0\).
The case that \(L=R=0\) is also satisfied by the choice of \(m_i\). Noteworthy, $p(a,0,i)=p(b,0,i)=0$ for all $i\in k$.

Assume that we constructed partial permutations $x_{i}$, $i\leq n-1$ of $X$ that satisfy conditions (1)--(5).
We will define \(x_n\) from \(x_{n-1}\) in each of the following cases: 
\begin{enumerate}
    \item[($\alpha$)] there exists $j\in k$ such that $\alpha_j:=(m_j)a_{j,0}x_{n-1}\cdots x_{n-1}a_{j,d_{A,j}}$ is not defined;
    \item[($\beta$)] we are not in case ($\alpha$) and there exists $j\in k$ such that $\beta_j:=(m_j)b_{j,0}x_{n-1}\cdots x_{n-1}b_{j,d_{B,j}}$ is not defined.
\end{enumerate}
If neither case ($\alpha$) nor case ($\beta$) occurs, that is for each $j\in k$ the points $\alpha_j$ and $\beta_j$ are defined, then we end the induction. Indeed, by (3), we have that $\alpha_j\neq\beta_j$ for all $j\in k$. By (5), we can extend $x_{m}$ to an arbitrary element $x\in G$. 
Note that for each $j\in k$, $$(m_j)a_{j,0}x\cdots xa_{j,d_{A,j}}\neq (m_j)b_{j,0}x\cdots xb_{j,d_{B,j}}.$$
Hence $x\in N_{A,B}$, as required.

Consider case ($\alpha$). By the assumption, there exists $j\in k$ such that $p(a,n-1,j)<d_{A,j}$. We define \(x_n\) as follows:
\begin{enumerate}[\rm(a)]
    \item Let \(q := (m_j)a_{j, 0}x_{n-1}\ldots x_{n-1} a_{j, p(n-1,j)}\).
    \item Let $T:=\operatorname{dom}(x_{n-1}) \cup \operatorname{img}(x_{n-1}) \cup \{q\}\cup \makeset{m_i}{\(i\in k\)}$. Since the set $(T)P:=\{(t)f: t\in T, f\in P\}$ is finite and \(G\) has no algebraicity, we can choose a point \(q'\in X\setminus (T)P\) such that \(x_n := x_{n-1} \cup \{(q, q')\}\) has an extension in \(G\).
\end{enumerate}
Note that $p(a,n,i)\geq p(a, n-1,i)$ and $p(b,n,i)\geq p(b, n-1,i)$ for all $i\in k$. Moreover, $p(a,n,j)>p(a,n-1,j)$. This will allow us to finish the induction in finitely many steps.
Let us show that \(x_n\) satisfies conditions (1)--(5).
Conditions (1), (2), (5) are immediate, whereas condition (4) follows from the choice of $q'$ and the inductive assumption. Suppose that condition (3) is not satisfied for some \(i\in k\). Let \(L, R\) be such that 
\begin{equation}\label{not3}\tag{$\ast$}
(m_i)a_{i, 0}x_{n} \ldots x_{n} a_{i, L}= (m_i)b_{i, 0}x_{n} \ldots x_{n} b_{i, R}
\end{equation}
The choice of \(m_{i}\) implies that either \(L>0\) or \(R>0\). We claim that the case \(L=0\) and \(R>0\) is impossible. Indeed, in this case we would have $(m_i)a_{i, 0}= (m_i)b_{i, 0}x_{n} \ldots x_{n} b_{i, R}$, from which follows \((m_i)a_{i, 0}b_{i, R}^{-1}\in \im(x_n)=\im(x_{n-1})\cup \{q'\}\). The latter contradicts either condition (4) of the inductive assumption or the definition of \(q'\), as $a_{i, 0}b_{i, R}^{-1}\in P$. 
Similarly we do not have \(L>0\) and \(R=0\). Thus \(1\leq \min\{L, R\}\).

It follows from condition (3) of the inductive hypothesis that if we replace \(x_n\) with \(x_{n-1}\) in equation~\ref{not3}, then this equality no longer holds. 
Thus there is either a smallest \(l<L\) such that
\[(m_i)a_{i, 0}x_{n} \ldots x_{n} a_{i, l} = q\]
or there is a smallest \(r<R\) such that
\[(m_i)b_{i, 0}x_{n} \ldots x_{n} b_{i, r} = q.\]
We consider the following three subcases of the case ($\alpha$) separately reaching a contradiction in each of them:
\begin{enumerate}[(i)]
    \item \(l\) is defined and \(r\) is not;
    \item \(r\) is defined and \(l\) is not;
    \item \(l,r\) are both defined.
\end{enumerate}

Consider case (i). 
By the minimality of $l$, we have
\[q=(m_i)a_{i, 0}x_{n} \ldots x_{n} a_{i, l}=(m_i)a_{i, 0}x_{n-1} \ldots x_{n-1} a_{i, l}.\]

Equation~(\ref{not3}) implies
\(m_i\in \operatorname{dom}(a_{i, 0}x_{n} \ldots x_{n} a_{i, L}) \). Condition (c) implies that $q'\notin (T)P$. It follows that $((q)x_n)f=(q')f\notin \dom(x_n)$ for each $f\in C$.
Since $a_{i,l+1}\in C$, we get $q\notin \dom(x_na_{i,l+1}x_n)$ and, consequently, $q\notin \dom(x_na_{i,l+1}x_na_{i,l+2})$. Thus \(m_i\notin \operatorname{dom}(a_{i, 0}x_{n} \ldots x_{n} a_{i, l+2})\).
Then $L\leq l+1$. Since $l<L$ we get \(L=l+1\).
Thus
\[(m_i)a_{i, 0}x_{n-1} \ldots x_{n-1} a_{i, L-1} = q.\]

Equation~(\ref{not3}) implies
\[(q)x_na_{i,L} =(m_i)b_{i, 0}x_{n} \ldots x_{n} b_{i, R}.\]
As \(r\) is not defined, the equation above yields
\[(q)x_na_{i,L} =(m_i)b_{i, 0}x_{n-1} \ldots x_{n-1} b_{i, R}.\]
So \((q)x_na_{i,L}b_{i, R}^{-1}=(q')a_{i,L}b_{i, R}^{-1}\in \im(x_{n-1})\) and, thus, $q'\in (\im(x_{n-1}))b_{i, R}a_{i,L}^{-1}\subseteq (T)P$, which contradicts the choice of \(q'\). 

Case (ii) is similar to case (i).

Consider case (iii). By the minimality of $l$ and $r$, we have
\[q=(m_i)a_{i, 0}x_{n} \ldots x_{n} a_{i, l}=(m_i)a_{i, 0}x_{n-1} \ldots x_{n-1} a_{i, l},\]
and
\[q= (m_i)b_{i, 0}x_{n} \ldots x_{n} b_{i,r}=(m_i)b_{i, 0}x_{n-1} \ldots x_{n-1} b_{i,r}.\]
Hence \[(m_i)a_{i, 0}x_{n-1} \ldots x_{n-1} a_{i, l} =(m_i)b_{i, 0}x_{n-1} \ldots x_{n-1} b_{i,r},\]
which contradicts condition \((3)\) of the induction hypothesis. We have finished case \((\alpha)\). Case \((\beta)\) is dual to case \((\alpha)\).

By the construction, for some $e\leq \sum_{i\in k}(d_{A,i}+d_{B,i})$ we get $x_{e}$ such that $p(a,t,i)=d_{A,i}$ and $p(b,t,i)=d_{B,i}$ for all $i\in k$. In other words, both $(m_i)a_{i,0}x_{e}\cdots x_{e}a_{i,d_{A,i}}$ and $(m_i)b_{i,0}x_{e}\cdots x_{e}a_{i,d_{A,i}}$ are defined for all $i\in k$. Then for $x_e$ neither case ($\alpha$) nor case $(\beta)$ holds.
\end{proof}

It is well known that for every semigroup $S$ the semigroup Zariski topology $\mathfrak{Z}(S)$ is $T_1$ and shift-continuous (see e.g.~\cite{main}). Also, in~\cite{main} it was shown that for each group $G$ inversion is continuous in  $(G,\mathfrak{Z}(G))$. Since infinite hyperconnected spaces are not Hausdorff and each $T_1$ topological group is Tychonoff, Theorem~\ref{hype} implies the following.

\begin{corollary}
Let $G$ be an infinite group with no algebraicity. Then $\mathfrak{Z}(G)$ is not Hausdorff and multiplication is not continuous in $(G,\mathfrak{Z}(G))$. 
\end{corollary}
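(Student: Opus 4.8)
The plan is to derive both assertions directly from Theorem~\ref{hype} together with the two structural facts recalled immediately before the corollary: that $\mathfrak{Z}(G)$ is always $T_1$ and shift-continuous, and that inversion is continuous in $(G,\mathfrak{Z}(G))$. Since ``no algebraicity'' is defined only for subgroups of $\Sym(X)$, I take $G$ to be such a subgroup, so that Theorem~\ref{hype} applies verbatim.

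First I would establish that $\mathfrak{Z}(G)$ is not Hausdorff. Because $G$ has no algebraicity, Theorem~\ref{hype} yields that $(G,\mathfrak{Z}(G))$ is hyperconnected. As $G$ is infinite, this exhibits $(G,\mathfrak{Z}(G))$ as an infinite hyperconnected space, and by the remark preceding the definition of hyperconnectedness no infinite hyperconnected space is Hausdorff. Hence $\mathfrak{Z}(G)$ is not Hausdorff. (This is precisely where infiniteness is used: a hyperconnected Hausdorff space can have at most one point, so the conclusion would fail for the trivial group.)

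Next I would show that multiplication is not continuous, arguing by contradiction. Suppose multiplication were continuous in $(G,\mathfrak{Z}(G))$. Combined with the continuity of inversion, this would make $(G,\mathfrak{Z}(G))$ a topological group, and since $\mathfrak{Z}(G)$ is $T_1$ it would be a $T_1$ topological group, hence Tychonoff and in particular Hausdorff. This contradicts the preceding paragraph, so multiplication cannot be continuous. The entire substance of the argument resides in Theorem~\ref{hype}; the corollary itself is only an assembly of standard facts, so I expect no genuine obstacle beyond arranging them in the correct logical order and keeping track of exactly where the hypothesis that $G$ is infinite enters.
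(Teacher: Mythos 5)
Your proof is correct and is essentially identical to the paper's argument: both derive non-Hausdorffness from Theorem~\ref{hype} plus the fact that infinite hyperconnected spaces are not Hausdorff, and then rule out continuous multiplication because a $T_1$ topological group (using the continuity of inversion and the $T_1$ property of $\mathfrak{Z}(G)$) would be Tychonoff, hence Hausdorff. No issues.
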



\section{Hausdorff semigroup topologies on $\Sym(X)$}

In this section we prove Theorem~\ref{paramin} by using similar techniques as in~\cite{Gaughan}.
Recall that the topology of pointwise convergence on $\Sym(X)$ is generated by the subbase consisting of the sets $U_{x,y}:=\{f\in\Sym(X): (x)f=y\}$, where $x,y\in X$.

\begin{lemma}\label{open iff closed}
Let \(X\) be an infinite set. Let \(\tau\) be a semigroup topology on \(\Sym(X)\) and \(x\in X\). Then 
 \(U_{x,x}\in\tau\) if and only if \(\Sym(X)\setminus U_{x,x}\in\tau\).
\end{lemma}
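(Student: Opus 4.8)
The plan is to work with the point stabilizer $H:=U_{x,x}=\{f\in\Sym(X):(x)f=x\}$, which is a subgroup of $\Sym(X)$. Since $\tau$ is a semigroup topology on a group, for each $g$ the translations $l_g\colon f\mapsto gf$ and $r_g\colon f\mapsto fg$ are continuous, and $l_{g^{-1}}$, $r_{g^{-1}}$ are continuous inverses, so every translation is a homeomorphism. A direct computation identifies the right cosets of $H$: we have $f\in Hg$ iff $(x)(fg^{-1})=x$ iff $(x)f=(x)g$, so $Hg=U_{x,(x)g}$. Hence $\Sym(X)=\bigsqcup_{w\in X}U_{x,w}$ and $\Sym(X)\setminus U_{x,x}=\bigcup_{w\neq x}U_{x,w}$.

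For the forward implication, assume $U_{x,x}\in\tau$. Each $U_{x,w}$ equals $r_g(H)$ for any $g$ with $(x)g=w$, hence is the image of the open set $H$ under a homeomorphism and is open; therefore $\Sym(X)\setminus U_{x,x}=\bigcup_{w\neq x}U_{x,w}$ is open, i.e. $U_{x,x}$ is closed. This direction uses only that translations are homeomorphisms.

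The converse is the substantive direction. Write $C:=\Sym(X)\setminus U_{x,x}$ and assume $C\in\tau$. First I would record a reduction: since $H$ is a subgroup whose cosets are homeomorphic translates of $H$, it suffices to exhibit a nonempty open set contained in a single coset; indeed, if some $B$ open is contained in $U_{y,x}=\sigma H$ (with $\sigma=(x\ y)$), then $\sigma^{-1}B\subseteq H$ is a nonempty open subset of $H$, and a subgroup with an interior point is open. The connection to multiplication is the key observation that $\sigma\sigma=1\in H$: if continuity of multiplication produced open $B_1,B_2\ni\sigma$ with $B_1B_2\subseteq H$, then taking the slice $f=\sigma\in B_1$ gives $\sigma B_2\subseteq H$, whence $B_2\subseteq\sigma^{-1}H=U_{y,x}$ and we are done. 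A complementary (and easier) computation shows what $C$ open gives locally: continuity of multiplication at $(1,\sigma)\mapsto\sigma\in C$ yields open $A\ni 1$ and $B\ni\sigma$ with $AB\subseteq C$, and the slice $g=\sigma$ gives $A\sigma\subseteq C$, which unwinds to $A\cap U_{x,y}=\varnothing$; dually one finds a neighborhood of $1$ avoiding $U_{y,x}$.

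The main obstacle is precisely that $H=\bigcap_{y\neq x}\bigl(\Sym(X)\setminus U_{x,y}\bigr)$ is an intersection of \emph{infinitely many} such conditions, so one cannot simply intersect the neighborhoods produced above (a closed subgroup of infinite index need not be open). To overcome this I would, following the techniques of Gaughan, use the full continuity of multiplication together with the $2$-transitivity of the action — which makes $H$ a maximal subgroup and lets homeomorphisms (conjugations $f\mapsto tft^{-1}$ by fixed transpositions, and translations by a single permutation of infinite support) permute all the cosets $U_{x,w}$ transitively. The aim is to compress the infinitely many coset-avoidance conditions into a single neighborhood relation: using elements supported on the orbit of $x$ under a well-chosen infinite-support permutation, one transfers the local information from $C$ open across all cosets simultaneously and produces an open neighborhood of $\sigma$ lying inside $U_{y,x}$ (equivalently, the open $B_1,B_2$ above with $B_1B_2\subseteq H$). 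By the reduction this forces $H$ to have an interior point, hence $U_{x,x}\in\tau$. This uniform handling of all cosets at once is the step I expect to be the crux of the argument.
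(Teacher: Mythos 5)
Your forward direction and your reduction (translations are homeomorphisms, a subgroup with an interior point is open, so it suffices to put a nonempty open set inside a single coset of $U_{x,x}$) match the paper and are fine. The substantive converse direction, however, is not actually proved: everything after ``To overcome this I would\dots'' is a statement of intent (``the aim is to compress\dots'', ``the step I expect to be the crux'') rather than an argument. Moreover, the one concrete mechanism you do propose is circular: continuity of multiplication at $(\sigma,\sigma)$ can only produce open $B_1,B_2\ni\sigma$ with $B_1B_2$ contained in a set \emph{already known to be open} around $1$; since $H=U_{x,x}$ is precisely what you are trying to show has interior, you cannot invoke continuity to get $B_1B_2\subseteq H$. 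What closedness of $H$ legitimately gives you is, for each fixed $y\neq x$, an open neighbourhood of $1$ avoiding $U_{x,y}$ — and, as you yourself observe, these cannot be intersected over the infinitely many $y$, and no device to circumvent this is supplied.

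The paper's proof shows the missing idea, and it needs only \emph{one} $y$. Since $C=\Sym(X)\setminus U_{x,x}$ is open, so is its translate $V=Cg=\Sym(X)\setminus U_{x,y}$, where $(x)g=y$; this is an open neighbourhood of $1$, so continuity of multiplication yields an open $W\ni 1$ with $WW\subseteq V$. The condition $WW\cap U_{x,y}=\emptyset$ unwinds to a purely set-theoretic statement about $X$: the sets $(x)W$ and $(y)W^{-1}$ are disjoint, hence at least one of them — say $(x)W$ — has complement of full cardinality $|X|$. This co-largeness lets one build a \emph{single} permutation $\sigma_x$ fixing $x$ with $(x)W\cap\bigl((x)W\bigr)\sigma_x=\{x\}$, and then $W\cap\sigma_x^{-1}W\sigma_x$ is an open neighbourhood of $1$ (shifts are homeomorphisms) contained in $U_{x,x}$, so the subgroup argument finishes. (In the other case one gets interior for $U_{y,y}$, which transfers to $U_{x,x}$ by conjugating with the transposition swapping $x$ and $y$.) So the crux is a cardinality dichotomy plus one conjugation trick, not $2$-transitivity or maximality: the fact that $U_{x,x}$ is a maximal subsemigroup is a separate lemma of the paper, used later in the proof of Theorem~\ref{paramin}, not in this lemma.
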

\begin{proof}
\((\Rightarrow):\) Note that $U_{x,x}$ is a subgroup of $\Sym(X)$. Every open subgroup $G$ of a semitopological group is closed as the complement of \(G\) is the union of cosets of $G$ and hence open.

\((\Leftarrow):\) Assume that $U_{x,x}$ is closed. Let \(y\in X\setminus \{x\}\) be arbitrary and \(g\in \Sym(X)\) be such that \((x)g= y\).
Then the right coset ${U_{x,x}}g$ is $U_{x,y}$. Since the right shift by $g$ is a permutation of $\Sym(X)$, it follows that 
\[V:=(\Sym(X)\setminus U_{x,x})g=\Sym(X)\setminus U_{x,y}=\{f\in \Sym(X): (x)f \neq y\}\]
is open and contains the identity permutation, which we denote by \(1_{X}\). Since $\tau$ is a semigroup topology, there exists an open neighbourhood  \(W\) of \(1_{X}\) such that \(WW\subseteq V\). For every $z\in X$ and $A \subseteq \Sym(X)$ let $$(z)A=\{(z)\rho: \rho\in A\}.$$ Observe that for all \(\rho,\sigma \in W\) we have \((x)\rho\sigma \neq y\) and thus \((x)\rho \neq (y)\sigma^{-1}\). In other words, 
$(x)W\cap (y)W^{-1} =\emptyset$.  Thus \((X\setminus (x)W)\cup (X\setminus (y)W^{-1}) =X\). Two cases are possible:
\begin{enumerate}
    \item \(|X\setminus (x)W|= |X|\);
    \item \(|X\setminus (y)W^{-1}|= |X|\).
\end{enumerate}

(1) Note that \(x\in (x)W\) as \(1_X\in W\). Since $|X\setminus (x)W|=|X|$, it is easy to find \(\sigma_x\in \Sym(X)\) such that \((x)\sigma_x=x\) and \((x)W\cap ((x)W)\sigma_x = \{x\}\). We have the following:
\[\{x\}= (x)W\cap (x)W\sigma_x= (x)W \cap (x\sigma_x^{-1})W\sigma_x\supseteq (x)(W\cap \sigma_x^{-1} W \sigma_x).\]
It follows that \(1_X\in W\cap \sigma_x^{-1} W \sigma_x \subseteq U_{x,x}\). Since $\tau$ is a semigroup topology, \(W\cap \sigma_x^{-1} W \sigma_x\in\tau\). Thus \(U_{x,x}\) is a subgroup of $\Sym(X)$ which has a nonempty interior with respect to $\tau$. Hence \(U_{x,x}\) is open as required.


(2) Note that \(y\in (y)W^{-1}\) as \(1_{X}\in W\). Since $|X\setminus (y)W^{-1}|=|X|$, it is easy to find \(\sigma_y\in \Sym(X)\) such that \((y)W^{-1}\cap ((y)W^{-1})\sigma_y = \{y\}\) and \((y)\sigma_y=y\). We have the following: 
\[\{y\}= (y)W^{-1}\cap (y)W^{-1}\sigma_y= (y)W^{-1} \cap (y\sigma_y^{-1})W^{-1}\sigma_y\supseteq (y)(W^{-1}\cap \sigma_y^{-1} W^{-1} \sigma_y).\]
It follows that \(1_X\in W^{-1}\cap (\sigma_y^{-1}W^{-1}\sigma_y )\subseteq U_{y,y}\) and, as such, $$1_X=1_X^{-1} \in (W^{-1}\cap \sigma_y^{-1} W^{-1} \sigma_y)^{-1} \subseteq U_{y,y}^{-1}.$$
Since $U_{y,y}^{-1}=U_{y,y}$ and $(\sigma_y^{-1} W^{-1} \sigma_y)^{-1}= \sigma_y^{-1} W \sigma_y$, we obtain the following:
$$1_X \in W\cap \sigma_y^{-1} W \sigma_y \subseteq U_{y,y}.$$ 
Hence, being a group with nonempty interior with respect to $\tau$, \(U_{y,y}\) is open as required.
\end{proof}

For a subset $A$ of $X$ let $\Sym(X)_A=\{f\in\Sym(X): (A)f=A\}$.
\begin{lemma}\label{sstab(x,y) closed}
Let \(X\) be an infinite set. 
For all \(x,y\in X\), \(\Sym(X)_{\{x,y\}}\) is a closed subset of \((\Sym(X),\mathfrak Z(\Sym(X)))\).
\end{lemma}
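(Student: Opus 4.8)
The plan is to realize $\Sym(X)_{\{x,y\}}$ as the equalizer of two semigroup polynomials of degree one, which is precisely a subbasic closed set of $\mathfrak{Z}(\Sym(X))$. Assume first that $x\neq y$ and let $\tau=(x\,y)\in\Sym(X)$ be the transposition interchanging $x$ and $y$ and fixing every other point. The key algebraic observation I would record is the identity
\[
\Sym(X)_{\{x,y\}}=\{f\in\Sym(X):\tau f=f\tau\},
\]
that is, the setwise stabilizer of $\{x,y\}$ coincides with the centralizer of $\tau$ in $\Sym(X)$.

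To justify this identity I would argue as follows. Since $\tau f=f\tau$ is equivalent to $f^{-1}\tau f=\tau$, and under the left-to-right composition convention the conjugate $f^{-1}\tau f$ is the transposition $((x)f\,(y)f)$, we have $\tau f=f\tau$ if and only if $((x)f\,(y)f)=(x\,y)$. As $f$ is a bijection and $x\neq y$, the points $(x)f$ and $(y)f$ are distinct, so this holds exactly when $\{(x)f,(y)f\}=\{x,y\}$, i.e. when $f$ stabilizes $\{x,y\}$ setwise. This is the required equality of sets.

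Finally, I would define the semigroup polynomials $(f)p=\tau f$ and $(f)q=f\tau$, whose coefficient rows are $(\tau,1_X)$ and $(1_X,\tau)$ respectively; these are degree-one semigroup polynomials over $\Sym(X)$. By the displayed identity, $\Sym(X)_{\{x,y\}}=\{f:(f)p=(f)q\}$, whose complement $N_{P,Q}$ is a single-row subbasic open set of $\mathfrak{Z}(\Sym(X))$. Hence $\Sym(X)_{\{x,y\}}$ is closed, as required. I expect no genuine obstacle in the case $x\neq y$: once the stabilizer is recognized as the centralizer of a transposition, closedness is immediate from the definition of the semigroup Zariski topology. The only delicate point is the degenerate case $x=y$, where $\tau$ becomes the identity and the equation $\tau f=f\tau$ trivializes to all of $\Sym(X)$; there $\Sym(X)_{\{x,y\}}=U_{x,x}$ is a single point stabilizer, which is not the centralizer of any nontrivial transposition, so if the statement is needed for $x=y$ it would have to be handled by a separate argument rather than this one.
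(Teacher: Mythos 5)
Your proof is correct and takes essentially the same route as the paper: both identify $\Sym(X)_{\{x,y\}}$ with the centralizer of the transposition swapping $x$ and $y$ (via conjugation, $f^{-1}\tau f=\tau$ iff $\{(x)f,(y)f\}=\{x,y\}$) and then observe that a centralizer is the equalizer of the two degree-one semigroup polynomials $f\mapsto \tau f$ and $f\mapsto f\tau$, hence closed in $\mathfrak{Z}(\Sym(X))$. Your closing caveat about the degenerate case $x=y$ is well taken but harmless: the paper's own proof tacitly assumes $x\neq y$ as well, and the lemma is only ever invoked with $y\in X\setminus\{x\}$.
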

\begin{proof}
For any $x,y\in X$ let \(\phi_{x, y}, \in \Sym(X)\) be such that \((x)\phi_{x, y}=y\), \((y)\phi_{x, y}=x\), and $\phi_{x,y}$ fixes all other points. Observe that for each \(f\in \Sym(X)\) we have $$\{f\in \Sym(X):\phi_{\{x,y\}}f = f\phi_{\{x,y\}}\}=\Sym(X)_{\{x,y\}},$$ as
\[\phi_{x,y}f=f\phi_{x,y} \iff f^{-1}\phi_{x,y}f=\phi_{x,y} \iff \phi_{(x)f,(y)f}=\phi_{x,y} \iff f\in \Sym(X)_{\{x,y\}}.\]
It remains to recall that the set \(\{f\in \Sym(X):\phi_{\{x,y\}}f = f\phi_{\{x,y\}}\}\) is closed in the semigroup Zariski topology on \(\Sym(X)\).
\end{proof}

We say that a subsemigroup \(T\) of a semigroup \(S\) is {\em maximal} if for each $x\in S\setminus T$, $S$ is generated by $T\cup\{x\}$.

\begin{lemma}\label{maximal subsemigroup}
Let \(X\) be an infinite set. \(U_{x,x}\) is a maximal subsemigroup of \(\Sym(X)\) for all  \(x\in X\).
\end{lemma}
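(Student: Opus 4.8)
The plan is to unwind the definition of maximality and reduce the statement to a single double-coset computation. Write $H=U_{x,x}=\{f\in\Sym(X):(x)f=x\}$, the point stabilizer of $x$, which is in particular a subgroup of $\Sym(X)$. By the definition of maximal subsemigroup, I must check that for every $g\in\Sym(X)\setminus H$ — that is, every $g$ with $(x)g\neq x$ — the subsemigroup generated by $H\cup\{g\}$ is all of $\Sym(X)$. Since this subsemigroup already contains $H$, it suffices to prove the single inclusion $\Sym(X)\setminus H\subseteq HgH$: indeed $H\cup HgH$ then exhausts $\Sym(X)$, and every element of $HgH$ is a length-three product $h_1gh_2$ of generators from $H\cup\{g\}$.

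So I would fix $a\in\Sym(X)\setminus H$, put $z:=(x)a\neq x$, and aim to write $a=h_1gh_2$ with $h_1,h_2\in H$. The key step is to identify the relevant cosets explicitly, being careful with the left-to-right composition convention. A short computation gives $Ha=\{b\in\Sym(X):(x)b=z\}$: every $ha$ sends $x$ to $z$, and conversely if $(x)b=z$ then $ba^{-1}$ fixes $x$, so $b\in Ha$. Dually, setting $w:=(x)g^{-1}$ one obtains $gH=\{b\in\Sym(X):(w)b=x\}$, and here $w\neq x$ because $g$ does not fix $x$ (if $(x)g^{-1}=x$ then applying $g$ gives $(x)g=x$, a contradiction).

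It then remains to produce an element of $Ha\cap gH$, i.e.\ a permutation $b$ with $(x)b=z$ and $(w)b=x$. Since $x\neq w$ and $z\neq x$, the assignment $x\mapsto z,\ w\mapsto x$ is a partial bijection of $X$, which extends to some $b\in\Sym(X)$. Writing $b=h_1a$ and $b=gh_2$ with $h_1,h_2\in H$ yields $a=h_1^{-1}gh_2\in HgH$, as required, completing the inclusion and hence the proof.

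I expect the only genuine content — and the step most deserving of care — to be the two coset identifications and the verification that the two-point assignment defining $b$ is indeed a partial bijection; everything else is bookkeeping. Conceptually, the statement is precisely the fact that the point stabilizer $H$ has exactly two double cosets in $\Sym(X)$ (equivalently, that $\Sym(X)$ is $2$-transitive, so $H$ acts transitively on $X\setminus\{x\}$), and the direct construction above keeps the argument self-contained, avoiding any appeal to the double-coset/orbit correspondence.
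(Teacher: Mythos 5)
Your proof is correct and follows essentially the same route as the paper: both arguments come down to showing that every element outside $U_{x,x}$ lies in the double coset $U_{x,x}\,g\,U_{x,x}$, by extending a two-point partial bijection to a permutation and then rearranging using that $U_{x,x}$ is a group. The paper phrases this as a direct computation (finding $h\in U_{x,x}$ with $((x)g)h=(x)f$, so that $ghf^{-1}\in U_{x,x}$), while you phrase it as exhibiting an element of $Ha\cap gH$; the difference is purely presentational.
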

\begin{proof}
Let \(f,g\in \Sym(X)\setminus U_{x,x}\). It suffices to show that $g$ belongs to the subsemigroup of $\Sym(X)$ generated by $\{f\}\cup U_{x,x}$. It is easy to see that there exists \(h\in U_{x,x}\) such that \(((x)g)h = (x)f\). Then \((x)ghf^{-1}= x\) and thus \(\phi=ghf^{-1}\in U_{x,x}\). So
\(g=\phi fh^{-1}\in U_{x, x} f U_{x, x}\) 
as required.
\end{proof}

We are in a position to prove Theorem~\ref{paramin}. We need to show that for each set $X$ the semigroup Hausdorff-Markov topology on $\Sym(X)$ is the topology of pointwise convergence.

\begin{proof}[{\bf Proof of Theorem~\ref{paramin}}]
First let us assume that $X$ is finite.
Then $\Sym(X)$ is finite as well and, as such, every Hausdorff topology on $\Sym(X)$ is discrete. Note that the topology of pointwise convergence on $\Sym(X)$ is also discrete. So in this case we are done. 

Assume that the set $X$ is infinite.
Since the topology of pointwise convergence itself is a Hausdorff semigroup topology, it suffices to show that every Hausdorff semigroup topology on \(\Sym(X)\) contains the topology of pointwise convergence. 
Let \(\tau\) be a Hausdorff semigroup topology on \(\Sym(X)\). Recall that the sets $U_{x,x}=\set{g \in \Sym(X)}{(x)g=x}$, $x\in X$ form a neighbourhood subbasis of the identity in the topology of pointwise convergence. 
Suppose for a contradiction that $\tau$ does not contain the topology of pointwise convergence on $\Sym(X)$. Then there exists \(x\in X\) such that $U_{x,x}$ is not open in $\tau$.
It follows from Lemma~\ref{open iff closed} that \(U_{x,x}\) is not closed in $\tau$. 
Then the closure $\overline{U_{x,x}}$ of $U_{x,x}$ with respect to $\tau$ is a subsemigroup of $\Sym(X)$ that properly contains $U_{x,x}$.
By Lemma~\ref{maximal subsemigroup}, $U_{x,x}$ is a maximal subsemigroup of $\Sym(X)$ and so \(\overline{U_{x,x}}= \Sym(X)\).

Let \(y\in X\setminus \{x\}\).  By Lemma \ref{sstab(x,y) closed}, the set  $\Sym(X)_{\{x,y\}}$ is closed in the semigroup Zariski topology on $\Sym(X)$. Since the Hausdorff semigroup topology $\tau$ contains the semigroup Zariski topology $\mathfrak{Z}(\Sym(X))$, the set $\Sym(X)_{\{x,y\}}$ is closed in $\tau$.
Observe that
$U_{x,x} \cap U_{y,y}=\Sym(X)_{\{x,y\}}\cap U_{x,x}$. It follows that $U_{x,x} \cap U_{y,y}$
is a closed subset of $U_{x,x}$, where the latter is endowed with the subspace topology inherited from $(\Sym(X),\tau)$.
Note that $U_{x,x}$ is isomorphic to $\Sym(X)$. So we apply Lemma~\ref{open iff closed} to $U_{x,x}$ to conclude that \(U_{x,x}\cap U_{y,y}\) is open in $U_{x,x}$.
Therefore there exists \(W\in \tau \) such that 
\[U_{x,x} \cap W =U_{x,x} \cap U_{y,y}.\] 
Then \(V=W\setminus \overline{(U_{x,x} \cap U_{y,y})} \in \tau\) and \(V \cap U_{x,x}= \emptyset\). 
So by the density of \(U_{x,x}\) in $\Sym(X)$, we have \(V= \emptyset\). Taking into account that the set $\Sym(X)_{\{x,y\}}$ is a closed in $(\Sym(X),\tau)$, we get 
\[W\subseteq \overline{(U_{x,x} \cap U_{y,y})}\subseteq \Sym(X)_{\{x,y\}}.\] As \(W\) is a nonempty open set and \(\Sym(X)_{\{x,y\}}\) is a subgroup of $\Sym(X)$, it follows that \(\Sym(X)_{\{x,y\}}\) is open. Let \(z\in X\setminus \{x,y\}\). Let \(\phi\in \Sym(X)\) be such that \((x)\phi=x\), \((y)\phi=z\) and \(\phi^{2}=\operatorname{id}_X\), then 
\[\Sym(X)_{\{x,z\}}=\{\phi f\phi: f\in \Sym(X)_{\{x,y\}}\}=\phi \Sym(X)_{\{x,y\}}\phi\] is also open, as shifts are homeomorphisms in $(\Sym(X),\tau)$.
 It follows that \( \Sym(X)_{\{x,y\}}\cap \Sym(X)_{\{x,z\}}\) is open. As
\[1_{X}\in \Sym(X)_{\{x,y\}}\cap \Sym(X)_{\{x,z\}} \subseteq U_{x, x}\]
and \(U_{x, x}\) is a group, it follows that \(U_{x, x}\) is open, which is a contradiction.
\end{proof}

\end{document}